 \pgfplotsset{compat=newest} 
  \pgfplotsset{plot coordinates/math parser=false}
\newcommand{\mset}{\mathbf{M}}
\newcommand{\graph}{\mathbf{G}}
\newcommand{\reels}{\mathbb{R}}
\newcommand{\tran}{\text{Tr}\,}
\newtheorem{theorem}{ Theorem}[section]
\newtheorem{definition}{ Definition}
\newtheorem{lemma}[theorem]{ Lemma}
\newtheorem{proposition}[theorem]{ Proposition}
\newtheorem{corollary}[theorem]{  Corollary}
\newif\ifproofs
\author{Matthew Philippe
\thanks{Matthew Philippe is a F.N.R.S./F.R.I.A. fellow, {\tt\small matthew.philippe@uclouvain.be}.}
\and 
Rapha\"{e}l M. Jungers 
\thanks{Rapha\"{e}l M. Jungers  is a F.R.S./F.N.R.S. research associate {\tt\small raphael.jungers@uclouvain.be}.}
\thanks{ Both authors are  with the ICTEAM Institute at the Universit\'e
 catholique de Louvain, Avenue Georges Lemaitre 4, B-1348 Louvain-la-Neuve, Belgium.
  The scientific responsibility rests with its authors.} 
  \thanks{Research supported by the Belgian Interuniversity Attraction Poles, and by the ARC grant 13/18-054 from the
Communaut\'{e} fran\c{c}aise de Belgique. }
}
\title{
Converse Lyapunov  theorems for discrete-time linear \\ switching systems with regular switching sequences.
 \\ 
		\vspace*{10pt}}
\begin{document}

\maketitle

 \begin{abstract}
  We present a stability analysis framework for the general 
  class of discrete-time linear switching systems
  for which the switching sequences belong to a regular language.
   They admit arbitrary switching systems as special cases.

Using recent results of X. Dai on the asymptotic growth rate of such systems,
we introduce the concept of \emph{multinorm} as an algebraic tool for stability analysis.

We conjugate this tool with two families of multiple quadratic Lyapunov functions,  parameterized by an integer $T \geq 1$, 
and obtain converse Lyapunov Theorems for each.

  Lyapunov functions of the first family associate one quadratic form per state of
  the automaton defining the switching sequences. They are made to decrease after every $T$ successive time steps. 
  The second family is made of the \textit{path-dependent} Lyapunov functions 
	of Lee and Dullerud. They are parameterized by an amount of memory $(T-1) \geq 0$. 

Our converse Lyapunov theorems are finite.  More precisely,
we give sufficient conditions on the asymptotic growth rate of a stable system
under which one can compute an integer parameter $T \geq 1$ for which both types of Lyapunov functions exist.
                 As a corollary of our results, we formulate an 
   arbitrary accurate approximation scheme for estimating the asymptotic growth rate
   of  switching systems with constrained switching sequences.
    
 \end{abstract}

\section{Introduction}
\label{sec:intro_ref}

A  switching system is a dynamical system whose state dynamics vary between different operating modes according to a switching
sequence. 
These modes can be represented by a collection of differential or difference equations. 
A discrete-time linear switching system is a switching system where each mode is represented by a linear difference equation. It
takes the form
\begin{align*}
&x_{t+1} = A_{\sigma(t)}^{} x_t,
\end{align*}
with $x_0 \in \reels^n$, $A_{\sigma(t)} \in \mset \subset \reels^{n \times n}$, 
where  $\mset$ is a finite set of $N$ matrices and the \textit{mode} at time $t$, $\sigma(t) \in \{1, \ldots, N\}$, 
represents a particular matrix in $\mset$. The sequence of modes $\sigma(0), \sigma(1), \ldots$, is the \textit{switching sequence} of the
system.

There has been a lot of attention on the subject in recent years. 
Applications can be found in many domains, from the study of viral mutations \cite{HeMiOAMS} to congestion control 
in computer networks \cite{ShWiAPSM}, networked control systems \cite{JuDIFSOD}, 
and in the broad field of theoretical computer sciences \cite{JuTJSR}.  More generally, classical methods for simulating more complex systems (such as Hybrid or Cyber-Physical Systems) often boil down to the analysis of such a switching system.

Given the vast range of applications for discrete-time linear switching systems, 
questions relating to their stability have received a great interest. 
Surveys on the subject can be found in  \cite{ShWiSCFS, LiAnSASO, LiMoBPIS}.

Many studies have considered arbitrary switching between modes (e.g.,
\cite{AgLiLASC, AhJuPaRoJSRP, AnShSC, JuTJSR}), 
in which $\sigma(t)$ can take any value in $\{1, \ldots, N\}$ at any time. 
In that scenario, a quantity known as the \textit{joint spectral radius} (JSR), 
introduced by Rota and Strang \cite{RoStANOT}, 
has been shown to be of particular interest as it corresponds to 
the worth case growth rate of the system. 
Because of its importance, there has been a large effort to develop approximation algorithms for this quantity 
(see for example \cite{AhJuPaRoJSRP, JuTJSR, BlNeOTAO} and references therein.).


Other studies have considered systems whose switching sequences $\sigma(0), \sigma(1), \ldots$ are constrained to be in some  
\textit{regular language}. Regular languages are languages generated by a finite automaton.
A finite automaton is a (strongly) connected, directed, 
labeled graph $\graph(V,E)$,
on a set of nodes $V$ and edges $E$. 
Each edge $e = (v_i, v_j, \sigma)$ between two nodes $v_i$ and $v_j$ carries a \emph{label} $\sigma \in \{1, \ldots, N\}$, where $N$ is the number of different labels.
A given sequence of labels $\sigma(0)$, $\sigma(1)$, $\ldots$ 
is a \textit{word} in the language generated by the automaton if there exists a path in the graph such that, 
$\forall \, t \geq 0$, the label on the $(t+1)$'th edge in the path corresponds to $\sigma(t)$.

 We refer to the system on the set of $N$ matrices $\mset$ and graph $\graph$ on $N$ labels as $S(\graph,\mset)$. 
 Since these systems have their switching signals \textit{constrained} by an automaton, 
 we naturally call them \textit{constrained switching systems}.

 Constrained switching systems admit arbitrary switching systems as special case.
  They can be found in works including those of Bliman and Ferrari-Trecate \cite{BlFeSAOD},
   Lee and Dullerud \cite{LeDuUSOD, LeDuUSSO};  Lee and Khargonekar \cite{LeKhDASO, LeKhOORF};
     Essick et al. \cite{EsDuCOLS}, and many others.
 With the exception of \cite{BlFeSAOD}
  these works tackle several control problems for switching systems 
  (stabilizing controller design, disturbance attenuation, switching sequence design, etc.), 
  showing the importance of these systems in engineering.
 These approaches rely on sufficient
  conditions for stability established
   by the existence of \textit{multiple quadratic  Lyapunov functions}.

 Multiple Lyapunov functions have proven to be effective stability analysis tools
  for switching and hybrid systems \cite{BrMLFA, LiAnSASO}. 
 Multiple \textit{quadratic} Lyapunov functions, however, usually only  provide \textit{sufficient}
  conditions for stability \cite{DaBePDLF, DaRiSAAC}. 
 In some cases, they can be embedded within a \textit{hierarchy} of
  multiple quadratic Lyapunov functions \cite{BlFeSAOD, LeDuUSOD}, 
  to \textit{asymptotically} provide a necessary condition for stability as well.
 A notable example of such a hierarchy is given by the path-dependent Lyapunov functions of Lee and Dullerud \cite{LeDuUSOD}.\\
 
 One of the main motivations behind this paper is to provide a \textit{finite} version
  of these asymptotically necessary conditions for the general class of constrained switching systems 
$S(\graph,\mset)$.

Our main contribution takes the form of a \textit{Converse Lyapunov Theorem}.
For any $\epsilon > 0$, if the worst-case exponential growth
rate of a stable system is smaller than $1 - \epsilon$, one can compute
an integer $T(\epsilon)$ such that $S(\graph, \mset)$ admits a multiple
quadratic Lyapunov function, with a quadratic form per node of $\graph$, decreasing after every $T(\epsilon)$
time steps.

We then analyze \textit{path-dependent Lyapunov functions} within our framework.
We show if a system has a multiple quadratic Lyapunov function decreasing after $T$ times steps as described above,
then it also admits a path-dependent Lyapunov function with memory $(T-1)$.
As a corollary, our converse Lyapunov theorems are also valid for path-dependent Lyapunov functions.
These results translate into an arbitrary accurate approximation scheme for estimating
the exponential growth rate of $S(\graph, \mset).$ \\

 The plan of the paper is as follows.
 
  {Section \ref{sec:CJSR}} introduces the reader to the notion of Constrained Joint Spectral radius (CJSR) describing the stability of a system.

	In {Section \ref{sec:multi}}, 
	we present a necessary and sufficient condition for stability based on multiple
	 Lyapunov functions composed of a different norm per node in $\graph$. We call these sets of norms Lyapunov \textit{multinorms}.
	
	{Section \ref{sec:converse}} is split in three subsections.
	 Subsection \ref{subsec:quad1} presents sufficient conditions on the CJSR of a given system under which it admits a \textit{quadratic Lyapunov multinorm}. 
	 In Subsection \ref{subsec:quad2}, we present the converse Lyapunov theorem and approximation scheme discussed above.
	 In Subsection \ref{subsec:pathdep} we study  path-dependent Lyapunov  functions \cite{LeDuUSOD}  within our framework. 
	 
	 Before concluding the paper, Section \ref{sec:examp} proposes an example with a numerical illustration.

\section*{Notations and conventions}
\label{sec:prel_note}

By $|{\cdot}|$ we denote a vector norm and by $\|{\cdot}\|$ an induced matrix norm. 

Given a matrix $A \in \reels^{n \times m}$, its transpose is denoted $A^\tran \in \reels^{m \times n}$. For $Q \in \reels^{n \times n} $ 
symmetric, we write $Q \succ 0$ if it is positive definite, i.e. $\forall x \in \reels^n$, $x \neq 0$, $x^\tran Q x > 0$.

For any directed labeled graph $\graph(V,E)$, the number of nodes in the graph is denoted by $|V|$.
 We use $p \in \graph$ to represent the fact that $p$ is a path in $\graph$,
 that is $p$ is a sequence of connected consecutive edges.
  The length of a path $p \in G$ is the number of edges it contains, and is written $|p|$.

 Given a system $S(\graph,\mset)$ and a path $p \in \graph$ of length t carrying a sequence of successive labels $\sigma(0),  \cdots \sigma(t-1)$, 
 the matrix $A_p$ is the \textit{associated matrix product},	
  $$A_p = A_{\sigma(t-1)} \cdots A_{\sigma(0)},$$
   with $A_{\sigma(i)} \in \mset$, $0 \leq i \leq t-1$. 
     Given any constant $\alpha \in \reels$, we denote by $\alpha \mset$ the set of scaled matrices $\{\alpha A, \, A\in \mset \}$.
   For any integer $T \geq 1$ and set of matrices $\mset$,   the set $\mset^T$ contains all products of $T$ matrices in $\mset$. 
   
\section{Constrained Joint Spectral Radius}
\label{sec:CJSR}

Given a graph $\graph$ representing a finite automaton on $N$ labels and 
a set of $N$ matrices $ \mset \subset \reels^{n \times n}$, 
the system $S(\graph, \mset)$ is said to be \emph{stable} if for any admissible switching sequence and any $x_0 \in \reels^n$, 
$$ \lim_{t \rightarrow \infty} A_{\sigma(t)}\cdot \ldots \cdot A_{\sigma(0)}x_0  = 0.$$

Switching systems with arbitrary switching are a special case of constrained switching systems. A graph $\graph$ with one node and one self-loop for each mode allows for arbitrary switching (Figure \ref{fig:g-star}). More generally, any path-complete graph \cite{AhJuPaRoJSRP} generates arbitrary switching sequences.

\begin{figure}[ht]
\centering
\includegraphics[scale=0.7]{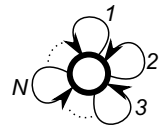}
\caption{Automaton generating arbitrary switching sequences.}
\label{fig:g-star}
\end{figure}

For arbitrary switching systems, given a finite 
set of matrices $ \mset \subset \reels^{n \times n}$, 
the quantity 
\begin{align*}
& \hat \rho(\mset) = \lim_{t \rightarrow \infty} \max \left \{ \| A_{\sigma(t-1)}^{\,} \cdot \ldots \cdot A_{\sigma(0)}^{\,} \|^{1/t}:  \right . \\
& \left . \qquad \qquad \qquad \qquad A_{\sigma(i)}^{\,}  \in \mset, \, 0 \leq i \leq t-1  \right \}
\end{align*}
is known as the \textit{joint spectral radius} (JSR) of the system and defines the asymptotic rate of growth of the arbitrary switching system.

To the best of our knowledge, 
 the \textit{constrained joint spectral radius} (CJSR) was first defined by Dai 
 \cite{DaAGTS}\footnote{Under the name "joint spectral radius with constraints".}
  as a generalization of the joint spectral radius.

\begin{definition}
The \emph{constrained joint spectral radius} $\hat \rho( \graph, \mset)$ of a system $S( \graph, \mset)$ is defined as 
\begin{equation}
\hat \rho(\graph, \mset) = \lim_{t \rightarrow \infty} \hat{\rho}_t(\graph, \mset),
\end{equation}
where $\hat{\rho}_t(\graph, \mset) = \max \left \{  \|A_p\|^{1/t} : p \in \graph; \, |p| = t \right \} $ is the maximum growth rate up to time $t$.
\label{def:GJSR}
\end{definition}

Remark that, by its definition, the CJSR is positive, 
homogeneous in the set of matrices $\mset$, i.e., for $\alpha \in \reels$
$$\hat \rho(\graph, \alpha \mset) = | \alpha|  \hat \rho(\graph, \mset),$$
and is independent of the norm $\|\cdot\|$
 used (by equivalence of norms in $\reels^n$).

The JSR of a switching system on a set $\mset$ of matrices is always an upper bound for the CJSR of any constrained system on this set. 

More generally : 
\begin{proposition}
For any finite set $\mset$ of matrices, given any two graphs $\graph_1$ and $\graph_2$ 
such that for any path $p_1 \in \graph_1$, 
there exists a path $p_2 \in \graph_2$ carrying the exact same labels, then
$$\hat \rho(\graph_1, \mset) \leq \hat \rho(\graph_2, \mset).\\$$
\end{proposition}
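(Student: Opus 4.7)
The plan is to exploit the hypothesis at the level of the finite-time quantities $\hat{\rho}_t$ and then pass to the limit. Since the definition of the CJSR is monotone in the set of matrix products realized by the graph, the inclusion of label sequences should translate directly into an inequality between the two CJSRs.

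First, I would fix an integer $t \geq 1$ and unpack the definition of $\hat{\rho}_t(\graph_1, \mset)$. For every path $p_1 \in \graph_1$ with $|p_1| = t$, the hypothesis provides a path $p_2 \in \graph_2$ of the same length carrying the same sequence of labels. By the definition of the associated matrix product $A_p$, this yields the key identity $A_{p_1} = A_{p_2}$, since both are the same product of matrices of $\mset$ in the same order. In particular $\|A_{p_1}\|^{1/t} = \|A_{p_2}\|^{1/t}$, and this quantity is one of the values over which the maximum defining $\hat{\rho}_t(\graph_2, \mset)$ is taken.

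Second, I would take the maximum over all $p_1 \in \graph_1$ with $|p_1| = t$ of both sides, obtaining
\begin{equation*}
\hat{\rho}_t(\graph_1, \mset) \leq \hat{\rho}_t(\graph_2, \mset).
\end{equation*}
Finally, passing to the limit $t \to \infty$, which exists by Definition \ref{def:GJSR}, gives the claimed inequality $\hat{\rho}(\graph_1, \mset) \leq \hat{\rho}(\graph_2, \mset)$.

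I do not foresee any genuine obstacle: the statement is essentially a monotonicity observation, and the only care needed is to make sure the map from paths in $\graph_1$ to paths in $\graph_2$ preserves both the length and the ordered sequence of labels, which is precisely what the hypothesis guarantees. No compactness, limit-superior/inferior subtlety, or norm dependence is at stake, since the CJSR is known from the preceding remark to be independent of the induced norm chosen.
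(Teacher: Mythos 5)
Your proof is correct; the paper states this proposition without giving a proof, and your argument is exactly the intended one: a path $p_2$ carrying the same label sequence as $p_1$ has the same length and the same associated matrix product, so $\hat{\rho}_t(\graph_1,\mset) \leq \hat{\rho}_t(\graph_2,\mset)$ for every $t$, and the inequality passes to the limit defining the CJSR. The one point that needed care — that the exponent $1/t$ is the same on both sides because identical label sequences force $|p_2| = |p_1|$ — is one you handle explicitly.
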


The following result links the CJSR of a system with its stability.  The statement is reformulated from \cite{DaAGTS}, Corollary 2.8.

\begin{theorem}[Dai \cite{DaAGTS}, Corollary 2.8] $\,$\\ A constrained switching system $S(\graph,\mset)$ is stable if and only if its constrained joint spectral radius satisfies 
\begin{equation*}
\hat \rho(\graph, \mset) < 1.
\end{equation*}

Moreover, for any $\epsilon > 0$, there exists a constant $c > 0$ such that for all trajectories of $S(\graph, \mset)$, 
\begin{equation*}
|x_t| \leq c (\hat \rho(\graph, \mset) + \epsilon)^t|x_0|.
\end{equation*}
\label{thm:stability-cjsr}
\end{theorem}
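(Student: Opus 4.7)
The plan is to establish the quantitative bound first---since it immediately implies stability whenever $\hat\rho(\graph,\mset)<1$---and then tackle the converse via a compactness/pigeonhole argument over admissible paths.

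For the quantitative bound, fix $\epsilon>0$ and invoke Definition \ref{def:GJSR}: because $\hat\rho_t(\graph,\mset)\to\hat\rho(\graph,\mset)$, there is an integer $T$ such that $\hat\rho_t\leq\hat\rho(\graph,\mset)+\epsilon$ for every $t\geq T$. This yields $\|A_p\|\leq(\hat\rho(\graph,\mset)+\epsilon)^t$ for every admissible path of length $t\geq T$, while finiteness of $\mset$ gives a uniform bound on $\|A_p\|$ for $|p|<T$. Combining both and absorbing the short-path bound into a single constant $c\geq 1$, one obtains $\|A_p\|\leq c\,(\hat\rho(\graph,\mset)+\epsilon)^t$ for every admissible $p$ of length $t$, hence $|x_t|\leq \|A_p\|\cdot|x_0|\leq c\,(\hat\rho(\graph,\mset)+\epsilon)^t|x_0|$ by submultiplicativity of the induced norm. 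Specializing to $\epsilon=(1-\hat\rho(\graph,\mset))/2$ whenever $\hat\rho(\graph,\mset)<1$ gives exponential decay, and therefore stability.

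For the converse I would prove the contrapositive: assuming $\hat\rho(\graph,\mset)\geq 1$, construct an admissible infinite switching sequence and a nonzero initial state whose trajectory does not converge to zero. For each large $t$, pick a path $p_t$ of length $t$ that (nearly) achieves $\hat\rho_t$, together with a unit vector $u_t$ realizing $|A_{p_t}u_t|=\|A_{p_t}\|$. Since $\graph$ has finitely many nodes and edges, a K\"onig's-lemma argument on the (finitely branching) tree of admissible prefixes extracts an infinite admissible path $p_\infty$ each of whose prefixes coincides with the prefix of infinitely many $p_t$. Compactness of the unit sphere then yields $u_{t_k}\to u_\infty$ along a further subsequence, and continuity of finite matrix products in the initial vector should force the trajectory along $p_\infty$ starting from $u_\infty$ to remain nontrivial, contradicting stability.

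The main obstacle lies precisely in that last extraction. The maximizers $p_t$ need not be prefixes of one another, and when $\hat\rho(\graph,\mset)=1$ the lower bound $\|A_{p_t}\|\geq(1-\epsilon_t)^t$ extracted from $\hat\rho_t\to 1$ can itself vanish if $\epsilon_t$ shrinks too slowly, so the naive diagonal argument may fail to produce a nonzero limiting trajectory. The clean way around this difficulty is the Berger--Wang-type identity that Dai establishes for the CJSR, characterizing $\hat\rho(\graph,\mset)$ as $\limsup_{t\to\infty}\max\{\rho(A_p)^{1/t}: p\in\graph \text{ a cycle of length } t\}$. Given $\hat\rho(\graph,\mset)\geq 1$, one then extracts a cycle whose associated matrix product carries an eigenvalue of modulus at least one, and repeating the corresponding switching sequence indefinitely produces a concrete periodic trajectory witnessing non-decay. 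This reduction to the spectral behaviour of cycles in $\graph$ is the technical heart of Corollary 2.8 of \cite{DaAGTS}.
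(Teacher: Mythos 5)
The paper itself offers no proof of this statement --- it is imported verbatim from Dai's Corollary~2.8 --- so your proposal stands or falls on its own. The first half is correct and standard: since Definition~\ref{def:GJSR} posits the limit $\hat\rho_t(\graph,\mset)\to\hat\rho(\graph,\mset)$, you get $\|A_p\|\le(\hat\rho(\graph,\mset)+\epsilon)^{|p|}$ for all long enough paths, a finite uniform bound on the finitely many shorter lengths supplies the constant $c$, and $|x_t|\le\|A_p\|\,|x_0|$ yields both the displayed estimate and the implication $\hat\rho(\graph,\mset)<1\Rightarrow$ stability.

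The necessity direction is where a genuine gap remains, and the repair you propose does not close the hole you yourself flagged in the K\"onig's-lemma sketch. From a Berger--Wang-type identity $\hat\rho(\graph,\mset)=\limsup_{t}\max\{\rho(A_p)^{1/t}: p\in\graph \mbox{ a cycle},\ |p|=t\}$ and the hypothesis $\hat\rho(\graph,\mset)\ge 1$, you only obtain cycles $p_k$ with $\rho(A_{p_k})\ge(1-\delta_k)^{|p_k|}$, $\delta_k\to 0$; this does \emph{not} yield a single cycle with $\rho(A_p)\ge 1$. That extraction is precisely the finiteness property, which is known to fail for general matrix sets already in the arbitrary-switching case: there exist sets with joint spectral radius equal to $1$ for which \emph{every} finite product has spectral radius strictly below $1$. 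For such a system your periodic witness does not exist, yet the theorem still asserts instability, so the argument is incomplete exactly in the critical case $\hat\rho(\graph,\mset)=1$. The standard way to establish ``stable $\Rightarrow\hat\rho(\graph,\mset)<1$'' is a uniformization argument rather than a cycle extraction: the space of pairs (admissible infinite path issued from a given node, unit initial vector) is compact, and pointwise convergence of every trajectory to zero, combined with continuity of finite products, produces a single time $T$ by which \emph{every} trajectory has shrunk by a fixed factor $\theta<1$; iterating gives uniform exponential decay $|x_t|\le c\lambda^t|x_0|$ with $\lambda<1$, whence $\hat\rho_t(\graph,\mset)\le c^{1/t}\lambda$ and $\hat\rho(\graph,\mset)\le\lambda<1$. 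Without an argument of this type (or a direct appeal to Dai's Gel'fand-type formula), the ``only if'' half of the theorem remains unproved.
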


As a corollary, for any system $S(\graph, \mset)$ and any $\epsilon >0$, 
the system $S(\graph, \mset/(\hat \rho(\graph,  \mset) + \epsilon))$ is  stable. 

%
%
%

It can be shown \cite{KoTBWF,WaSOLA}, 
 that the  CJSR of $S(\graph(V,E), \mset)$ is equivalent to the JSR of an arbitrary switching system
  in dimension $|V|n$. As a direct consequence, all the tools developed for the study of arbitrary switching systems can be applied for constrained switching systems, with the (potentially very heavy) cost of having to work in higher dimensions.
    
  In what is  developed below, 
  we focus on the system $S(\graph, \mset)$ as it is,
    formulating results without affecting the dimension $n$ of the system.

\section{Multinorms}
\label{sec:multi}

For arbitrary switching systems, one can also define the JSR as follows:
\begin{proposition}[e.g. \cite{JuTJSR}, Proposition 1.4]
$\,$\\
The joint spectral radius of a set of matrices $\mset$ is given by 
\begin{equation}
 \rho(\mset) = \inf_ {\|\cdot\|}\max_{A \in \mset} \{\|A\|\},
 \label{eq:JSRnorm}
\end{equation}
where the infimum is taken over all induced matrix norms in $\reels^{n \times n}.$
\label{prop:JSRnorms}
\end{proposition}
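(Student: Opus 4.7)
The plan is to prove both inequalities separately, using submultiplicativity for the easy direction and an explicit construction of an extremal norm for the hard direction.

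First, I would establish $\rho(\mset) \leq \inf_{\|\cdot\|} \max_{A\in\mset} \|A\|$. Fix any induced matrix norm $\|\cdot\|$ and let $\mu = \max_{A\in\mset}\|A\|$. By submultiplicativity, for every product $A_{\sigma(t-1)}\cdots A_{\sigma(0)}$ with factors in $\mset$, we have $\|A_{\sigma(t-1)}\cdots A_{\sigma(0)}\| \leq \mu^t$. Taking the maximum, raising to the $1/t$, and letting $t\to\infty$ yields $\rho(\mset)\leq \mu$. Since this holds for every induced norm, taking the infimum gives the claimed inequality.

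For the reverse inequality, fix $\epsilon>0$ and construct a norm achieving $\max_{A\in\mset}\|A\| \leq \rho(\mset)+\epsilon$. The idea is to set, for $x\in\reels^n$,
\begin{equation*}
|x|_\epsilon \;=\; \sup\left\{ \frac{|A_p x|}{(\rho(\mset)+\epsilon)^{|p|}} : p \text{ a finite product of factors in } \mset \right\},
\end{equation*}
where by convention the empty product gives $|x|_\epsilon \geq |x|$. The first key step is to check that this supremum is finite: by Gelfand-type behavior of the JSR, there exists a constant $c>0$ such that every product $A_p$ of length $t$ satisfies $\|A_p\| \leq c(\rho(\mset)+\epsilon/2)^t$, so the ratio inside the sup is bounded by $c\,((\rho+\epsilon/2)/(\rho+\epsilon))^{t}|x|$, which tends to zero with $t$. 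Hence $|\cdot|_\epsilon$ is well-defined and lies between $|\cdot|$ and $c'|\cdot|$ for some $c'>0$; it is a genuine norm (positive definiteness, homogeneity, and triangle inequality are immediate from the definition).

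The decisive property follows by a shift in the supremum: for any $B\in\mset$,
\begin{equation*}
|Bx|_\epsilon \;=\; \sup_p \frac{|A_p B x|}{(\rho+\epsilon)^{|p|}} \;=\; (\rho+\epsilon)\sup_p \frac{|A_{pB} x|}{(\rho+\epsilon)^{|p|+1}} \;\leq\; (\rho+\epsilon)\,|x|_\epsilon,
\end{equation*}
where $pB$ denotes the extended product and the last inequality holds because $\{pB\}$ is contained in the set of all finite products. Thus the induced matrix norm $\|\cdot\|_\epsilon$ satisfies $\max_{A\in\mset}\|A\|_\epsilon \leq \rho(\mset)+\epsilon$, giving $\inf_{\|\cdot\|} \max_{A\in\mset}\|A\| \leq \rho(\mset)+\epsilon$. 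Letting $\epsilon \to 0$ concludes. The main obstacle is justifying finiteness of the extremal sup and verifying it produces a \emph{norm} rather than merely a seminorm; once the bound from the JSR is invoked these checks are routine.
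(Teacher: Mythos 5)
Your proof is correct and follows essentially the same route as the paper: the paper quotes this proposition from \cite{JuTJSR} without reproving it, but its own Lemmas \ref{lemma:lower-bound1} and \ref{lemma:mnexists} (proved in the Annex) generalize it to multinorms using exactly your two steps --- submultiplicativity of the induced norm for the inequality $\rho(\mset)\leq\max_{A\in\mset}\|A\|$, and the scaled supremum $\sup_p |A_p x|/(\hat\rho+\epsilon)^{|p|}$ as an extremal norm for the converse. Your finiteness check via the Gelfand-type bound and the shift-in-the-supremum argument are the standard and correct way to complete that construction.
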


Proposition \ref{prop:JSRnorms} implies that any stable arbitrary switching system  admits a norm as \textit{common Lyapunov function}.

It is straightforward to find examples of constrained switching systems that fail to have this property: this is the case for instance in the example in Section \ref{sec:examp} 
below.  
Hence, it is natural to seek for a generalization of the algebraic notion of norm, 
which would act as a characterization of stability of constrained switching systems.

In this section, we introduce such a generalization under the name of \textit{multinorm}.

\begin{definition}
For a system $S(\graph(V,E),\mset)$, $\mset \subset \reels^{n \times n}$, a $\gamma$-multinorm is a set of $|V|$ norms in
 $\reels^n$ $\{ |{\cdot}|_1, \cdots, |{\cdot}|_{|V|} \}$, such that for any edge $e = (v_i, v_j, \sigma) \in E$, $\forall x \in \reels^n$,
\begin{equation}
|A_\sigma x|_j \leq \gamma |x|_i.
\label{eq:mn}
\end{equation}
\label{def:mn1}
 \end{definition}
 
\begin{definition}

A \textit{quadratic multinorm} is a multinorm defined on quadratic norms, $|x|_i = \sqrt{x'Q_ix}$ for $Q_i \succ 0$.\\
A \textit{Lyapunov multinorm} is a $\gamma$-multinorm with $\gamma <  1$.
 \end{definition}

The main result of this section is as follows :

\begin{theorem}[Lyapunov multinorms and stability]
A constrained switching system $S( \graph, \mset)$ is stable if and only if it admits a Lyapunov multinorm.
\label{thm:stab-mn}
\end{theorem}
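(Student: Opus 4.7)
The plan is to prove the two directions separately, with necessity being the substantive one.

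For sufficiency, I would show that a $\gamma$-multinorm with $\gamma<1$ forces $\hat\rho(\graph,\mset) \le \gamma$, after which Theorem \ref{thm:stability-cjsr} yields stability. Concretely, given any path $p \in \graph$ of length $t$ with label sequence $\sigma(0),\dots,\sigma(t-1)$ visiting nodes $v_{i_0},v_{i_1},\dots,v_{i_t}$, iterating \eqref{eq:mn} along $p$ gives $|A_p x|_{i_t} \le \gamma^t |x|_{i_0}$ for every $x \in \reels^n$. Using equivalence of norms in $\reels^n$, there exist constants $c_1,c_2>0$ (depending only on the finite collection $\{|\cdot|_i\}$) such that $\|A_p x\| \le c_2 \gamma^t c_1 \|x\|$, hence $\|A_p\|^{1/t} \le \gamma (c_1 c_2)^{1/t} \to \gamma$. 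Taking the limit in Definition \ref{def:GJSR} yields $\hat\rho(\graph,\mset) \le \gamma < 1$.

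For necessity, assume $\hat\rho := \hat\rho(\graph,\mset) < 1$ and pick any $\gamma \in (\hat\rho,1)$. Write $\gamma = \hat\rho + \epsilon$; by Theorem \ref{thm:stability-cjsr} there exists $c>0$ such that $\|A_p\| \le c\gamma^{|p|}$ for every admissible path $p$. I would then construct the multinorm explicitly by a supremum over all future trajectories out of each node: for each $v_i \in V$ and each $x \in \reels^n$, set
\begin{equation*}
|x|_i = \sup\bigl\{ \gamma^{-|p|} \|A_p x\| \,:\, p \in \graph,\ p \text{ starts at } v_i \bigr\},
\end{equation*}
where the supremum includes the empty path (giving $\|x\|$). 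The multinorm property is then immediate: for any edge $e = (v_i,v_j,\sigma) \in E$ and any path $p'$ starting at $v_j$, concatenating $e$ with $p'$ yields a path $p$ of length $|p'|+1$ starting at $v_i$ with $A_p = A_{p'}A_\sigma$, so
\begin{equation*}
\gamma^{-|p'|}\|A_{p'}A_\sigma x\| = \gamma \cdot \gamma^{-|p|}\|A_p x\| \le \gamma\, |x|_i,
\end{equation*}
and taking the sup over $p'$ gives $|A_\sigma x|_j \le \gamma |x|_i$ as required.

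The main technical point, and the part I expect to be the main obstacle, is verifying that each $|\cdot|_i$ is a well-defined norm. Finiteness of the supremum is exactly where the growth bound from Theorem \ref{thm:stability-cjsr} enters: since $\|A_p\| \le c\gamma^{|p|}$ uniformly in the starting node, one gets $\gamma^{-|p|}\|A_p x\| \le c\|x\|$, so $|x|_i \le c\|x\| < \infty$. The algebraic properties (absolute homogeneity and the triangle inequality) transfer directly from $\|\cdot\|$ since suprema preserve them, and positive-definiteness follows from the inclusion of the empty path, which gives $|x|_i \ge \|x\|$; combined with the upper bound this also shows that $|\cdot|_i$ is equivalent to the ambient norm. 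Because $\gamma < 1$, the constructed set $\{|\cdot|_i\}_{i=1}^{|V|}$ is a Lyapunov multinorm, completing the proof.
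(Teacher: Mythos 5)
Your proof is correct and follows essentially the same route as the paper: the sufficiency direction is the paper's Lemma \ref{lemma:lower-bound1} (iterate \eqref{eq:mn} along a path and use equivalence of norms), and the necessity direction is the paper's Lemma \ref{lemma:mnexists}, whose construction $|x|_i = \sup_{p \in \graph_i} |A_p' x|$ with $A_k' = A_k/(\hat\rho+\epsilon)$ is exactly your $\sup_p \gamma^{-|p|}\|A_p x\|$ with the scaling absorbed into the matrices. Your justification of finiteness via the explicit growth bound of Theorem \ref{thm:stability-cjsr} is in fact slightly more explicit than the paper's one-line appeal to stability.
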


The result is obtained through the following Lemmas:

 \begin{lemma}
If a system $S(\graph, \mset)$ admits a $\gamma$-multinorm, then $\hat \rho(\graph, \mset) < \gamma$.
 \label{lemma:lower-bound1}
 \end{lemma}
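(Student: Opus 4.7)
The plan is to iterate the multinorm inequality along any admissible path and then translate the resulting bound back into a statement about an ordinary induced matrix norm.

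First I would fix an arbitrary vector norm $|\cdot|$ on $\reels^n$ with the associated induced matrix norm $\|\cdot\|$. Since every pair of norms on $\reels^n$ is equivalent and the multinorm contains only finitely many norms $|\cdot|_1, \ldots, |\cdot|_{|V|}$, there exist constants $0 < c_1 \leq c_2$ such that for every node index $k$ and every $x \in \reels^n$,
\begin{equation*}
c_1 |x| \;\leq\; |x|_k \;\leq\; c_2 |x|.
\end{equation*}
By the remark following Definition \ref{def:GJSR}, the CJSR does not depend on the norm chosen, so it suffices to bound $\|A_p\|$ in this fixed norm.

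Next I would take any path $p \in \graph$ of length $t$, say starting at node $v_{i_0}$, passing through nodes $v_{i_0}, v_{i_1}, \ldots, v_{i_t}$, and carrying the sequence of labels $\sigma(0), \ldots, \sigma(t-1)$, so that $A_p = A_{\sigma(t-1)} \cdots A_{\sigma(0)}$. A straightforward induction on $t$ using the defining inequality (\ref{eq:mn}) of the multinorm gives
\begin{equation*}
|A_p x|_{i_t} \;\leq\; \gamma \,|A_{\sigma(t-2)} \cdots A_{\sigma(0)} x|_{i_{t-1}} \;\leq\; \cdots \;\leq\; \gamma^t |x|_{i_0}
\end{equation*}
for every $x \in \reels^n$. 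Combining with the two-sided bound between $|\cdot|$ and the $|\cdot|_k$ yields $|A_p x| \leq (c_2/c_1)\, \gamma^t |x|$, and hence $\|A_p\| \leq (c_2/c_1)\, \gamma^t$.

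Finally, I would take the maximum over all paths $p$ of length $t$ and the $t$-th root, obtaining
\begin{equation*}
\hat{\rho}_t(\graph, \mset) \;\leq\; (c_2/c_1)^{1/t}\, \gamma,
\end{equation*}
and then let $t \to \infty$ so that the prefactor tends to $1$, giving $\hat{\rho}(\graph,\mset) \leq \gamma$. The conclusion $\hat{\rho}(\graph,\mset) < \gamma$ stated in the lemma then follows because the bound holds with $\gamma$ replaced by any $\gamma' \in (\hat\rho,\gamma)$ is not needed; rather, one simply notes that the inequality $\|A_p\|^{1/t}\le(c_2/c_1)^{1/t}\gamma$ is strict for every finite $t$ whenever $c_2>c_1$, and if $c_1=c_2$ the norms $|\cdot|_k$ coincide with $|\cdot|$ so the multinorm reduces to an extremal norm for the system. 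The main (mild) obstacle is just bookkeeping this passage from the vertex-indexed norms to a single norm; there is no real analytic difficulty, since iteration of (\ref{eq:mn}) along the path is the whole idea.
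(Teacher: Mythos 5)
Your argument is essentially the paper's own proof: iterate inequality (\ref{eq:mn}) along a path of length $t$ to get $|A_p x|_{i_t}\le\gamma^t|x|_{i_0}$, convert to the fixed norm via equivalence constants, take $t$-th roots, and let $t\to\infty$. Up to the conclusion $\hat\rho(\graph,\mset)\le\gamma$ everything is correct and matches the annex.

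The one place you go wrong is the final attempt to upgrade $\le$ to the strict inequality $<$ appearing in the lemma statement. Strictness of $\|A_p\|^{1/t}\le(c_2/c_1)^{1/t}\gamma$ at each finite $t$ does not survive the limit $t\to\infty$, and the case split on $c_1=c_2$ proves nothing. In fact the strict version is false: take $\mset=\{\gamma I\}$ with a single self-loop; the Euclidean norm is a $\gamma$-multinorm, yet $\hat\rho=\gamma$. You should simply conclude $\hat\rho\le\gamma$ and stop. Note that the paper's own proof does exactly this (its last line reads $\hat\rho\le\gamma$, despite the $<$ in the statement), and every subsequent use of the lemma (Theorem \ref{thm:stab-mn}, Corollary \ref{cor:ptas1}) only needs the non-strict bound. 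So your proof is sound once the last paragraph is deleted; the strict inequality in the statement is a slip in the paper, not something you were obliged to establish.
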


  \begin{lemma}
For any system $S(\graph, \mset)$, for any $\epsilon > 0$, there exists a $\left ( \hat \rho(\graph,\mset) + \epsilon \right ) $-multinorm.
 \label{lemma:mnexists}
 \end{lemma}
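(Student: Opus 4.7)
The plan is to construct the multinorm explicitly as an extremal-type norm, one per node, adapting the classical construction for the joint spectral radius to the graph-constrained setting. Set $\gamma = \hat\rho(\graph,\mset) + \epsilon$ and fix any reference norm $\|\cdot\|$ on $\reels^n$. For each node $v_i \in V$, define
\begin{equation*}
|x|_i = \sup_{t \geq 0}\; \sup\left\{ \frac{\|A_p\, x\|}{\gamma^{t}} \;:\; p \in \graph,\; |p|=t,\; p \text{ starts at } v_i \right\},
\end{equation*}
where the inner supremum at $t=0$ (the empty path) is interpreted as $\|x\|$.

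First I would show that each $|\cdot|_i$ is a well-defined norm. Homogeneity and the triangle inequality follow immediately from the corresponding properties of $\|\cdot\|$ and the fact that taking a supremum preserves them. Positivity, i.e.\ $|x|_i \geq \|x\|$, comes from the $t=0$ term, so $|x|_i = 0$ forces $x=0$. The only delicate point is \emph{finiteness} of the supremum, and this is the main obstacle. To handle it, I would apply Theorem \ref{thm:stability-cjsr} to the rescaled system $S(\graph, \mset/\gamma)$, whose CJSR equals $\hat\rho(\graph,\mset)/\gamma < 1$ by homogeneity, hence is stable. Choosing $\epsilon' > 0$ small enough so that $\hat\rho(\graph,\mset)/\gamma + \epsilon' \leq 1$, Theorem \ref{thm:stability-cjsr} yields a constant $c > 0$ with $\|A_p x\|/\gamma^{|p|} \leq c\,\|x\|$ uniformly over all paths $p$ of the original graph. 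This bounds the supremum and proves $|x|_i < \infty$ for every $x$ and every node $v_i$.

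Finally I would verify the multinorm inequality \eqref{eq:mn}. Fix an edge $e = (v_i,v_j,\sigma) \in E$. Any path $q$ of length $t$ starting at $v_j$ can be prepended with the edge $e$ to yield a path $p = e\cdot q$ of length $t+1$ starting at $v_i$, with associated matrix product $A_p = A_q A_\sigma$. Therefore
\begin{equation*}
\frac{\|A_q (A_\sigma x)\|}{\gamma^t} \;=\; \gamma\cdot \frac{\|A_p x\|}{\gamma^{t+1}} \;\leq\; \gamma\, |x|_i .
\end{equation*}
Taking the supremum over all $t \geq 0$ and all paths $q$ starting at $v_j$ gives $|A_\sigma x|_j \leq \gamma\, |x|_i$, which is exactly the required multinorm property with constant $\gamma = \hat\rho(\graph,\mset) + \epsilon$, completing the proof.
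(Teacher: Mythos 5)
Your proposal is correct and follows essentially the same route as the paper: define at each node the extremal-type norm $\sup_{p}\|A_px\|/\gamma^{|p|}$ over paths starting at that node, check the norm axioms, and obtain the contraction inequality by prepending an edge. The only (welcome) difference is that you justify finiteness explicitly by invoking Theorem \ref{thm:stability-cjsr} on the rescaled system, whereas the paper simply asserts well-definedness from stability.
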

 See Annex \ref{annex:lemmas} for the proofs of Lemmas \ref{lemma:lower-bound1} and \ref{lemma:mnexists}.
 
 \begin{proof}[Proof of Theorem \ref{thm:stab-mn}]
When $\hat \rho \geq 1$, there simply cannot be any Lyapunov multinorm (Lemma \ref{lemma:lower-bound1}), and in the other case,
a Lyapunov multinorm  is obtained by taking any 
$( \hat \rho+\epsilon)$-multinorm (Lemma \ref{lemma:mnexists}), 
for $\epsilon < 1 - \hat \rho$ . 
\end{proof}

Concepts similar to Lyapunov multinorms have previously appeared in the literature
 (see \cite{LeDuUSOD, BrMLFA, AhARAH, AhJuPaRoJSRP, DaRiSAAC} for previous works making use
of multiple Lyapunov functions),
but the spirit here is a bit different.
Our goal is to find the suitable generalization of the concept of norm for constrained switching systems.
We do not make any algorithmic consideration in Definition \ref{def:mn1}. 
    In particular, we do not restrict the algebraic nature of the considered norms (i.e., quadratic, polynomial,...). 

The benefit of introducing multinorms is twofold.    
First, they allow to express the CJSR in a manner similar to $(\ref{eq:JSRnorm}).$  
The following result,
 a direct corollary of Lemmas  $\ref{lemma:lower-bound1}$ and $\ref{lemma:mnexists}$,  
 generalizes Proposition \ref{prop:JSRnorms} to constrained switching systems :
\begin{proposition}
For any system $S(\graph, \mset)$,  $\hat \rho (\graph, \mset)$ is the infimum over all $\gamma > 0$ such that there exists a $\gamma$-multinorm for $S(\graph, \mset)$.
\end{proposition}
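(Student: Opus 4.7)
The proposition is an immediate consequence of Lemmas \ref{lemma:lower-bound1} and \ref{lemma:mnexists}, and the plan is simply to package the two inequalities that these lemmas provide. Let me denote
\[
\gamma^{\star} = \inf\{\gamma > 0 : S(\graph,\mset) \text{ admits a } \gamma\text{-multinorm}\},
\]
and the goal is to show $\gamma^{\star} = \hat\rho(\graph,\mset)$.

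For the inequality $\hat\rho(\graph,\mset) \leq \gamma^{\star}$, I would invoke Lemma \ref{lemma:lower-bound1}: for every $\gamma$ for which a $\gamma$-multinorm exists, we have $\hat\rho(\graph,\mset) < \gamma$. Taking the infimum over such $\gamma$ on the right-hand side gives $\hat\rho(\graph,\mset) \leq \gamma^{\star}$.

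For the reverse inequality $\gamma^{\star} \leq \hat\rho(\graph,\mset)$, I would invoke Lemma \ref{lemma:mnexists}: for every $\epsilon > 0$ there exists a $(\hat\rho(\graph,\mset) + \epsilon)$-multinorm, so $\gamma^{\star} \leq \hat\rho(\graph,\mset) + \epsilon$. Letting $\epsilon \downarrow 0$ yields $\gamma^{\star} \leq \hat\rho(\graph,\mset)$.

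Combining the two bounds gives the claimed equality. There is essentially no obstacle here beyond assembling the two lemmas correctly; the only subtlety worth noting is that the infimum is generally not attained (Lemma \ref{lemma:lower-bound1} only yields a strict inequality $\hat\rho < \gamma$ when a $\gamma$-multinorm exists), which is why the statement is phrased as an infimum rather than a minimum, mirroring the classical characterization of the JSR in Proposition \ref{prop:JSRnorms}.
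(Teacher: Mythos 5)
Your proof is correct and is exactly the argument the paper intends: the proposition is stated there as a direct corollary of Lemmas \ref{lemma:lower-bound1} and \ref{lemma:mnexists}, assembled in precisely the way you describe. (Minor remark: the annex proof of Lemma \ref{lemma:lower-bound1} actually only establishes $\hat\rho \leq \gamma$, but the non-strict inequality is all your argument needs.)
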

Second, as shown in the next section, multinorms provide a framework to easily develop converse Lyapunov theorems for constrained switching systems.

%
%
%

\section{Converse Lyapunov Theorems}
\label{sec:converse}

In this section, we generalize the Converse Lyapunov Theorems \ref{thm:ando} and \ref{thm:ptasjsr} stated below to constrained switching systems using multinorms.

\begin{theorem}[Ando and Shih \cite{AnShSC}]
$\,$\\
Any \textit{arbitrary switching system} on a set of matrices
 $\mset \subset \reels^{n \times n}$ such that 
 $\hat \rho(\mset) < 1/\sqrt{n}$, admits a \textit{common quadratic Lyapunov function}.
\label{thm:ando}
\end{theorem}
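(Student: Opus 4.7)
The plan is to combine the norm-characterization of the JSR (Proposition \ref{prop:JSRnorms}) with John's ellipsoid theorem, which states that any symmetric convex body in $\reels^n$ is sandwiched between two homothetic ellipsoids with ratio $\sqrt{n}$. The rough idea is: we first obtain an \emph{extremal} norm that contracts every matrix in $\mset$ by a factor strictly less than $1/\sqrt{n}$, then approximate this norm by a quadratic one at a multiplicative cost of at most $\sqrt{n}$, which still leaves a contraction factor $< 1$.

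First I would choose $\epsilon > 0$ small enough that $\hat\rho(\mset) + \epsilon < 1/\sqrt{n}$. By Proposition \ref{prop:JSRnorms}, there exists an induced matrix norm $\|\cdot\|_\star$ associated with some vector norm $|\cdot|_\star$ on $\reels^n$ such that $\max_{A \in \mset} \|A\|_\star \leq \hat\rho(\mset) + \epsilon$. Concretely, this means $|Ax|_\star \leq (\hat\rho(\mset)+\epsilon)\,|x|_\star$ for every $x \in \reels^n$ and every $A \in \mset$.

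Next I would invoke John's ellipsoid theorem applied to the (symmetric, convex, compact) unit ball $K = \{x : |x|_\star \leq 1\}$: there is a centrally symmetric ellipsoid $E \subseteq K \subseteq \sqrt{n}\, E$, which corresponds to some positive definite $Q \succ 0$ defining a quadratic norm $|x|_Q = \sqrt{x^\tran Q x}$. Translating the inclusions into norm inequalities via homogeneity yields
\begin{equation*}
|x|_\star \;\leq\; |x|_Q \;\leq\; \sqrt{n}\,|x|_\star \qquad \forall x \in \reels^n.
\end{equation*}
For the corresponding induced matrix norm this gives, for every $A \in \mset$,
\begin{equation*}
\|A\|_Q \;=\; \sup_{|x|_Q \leq 1} |Ax|_Q \;\leq\; \sqrt{n}\sup_{|x|_\star \leq 1} |Ax|_\star \;\leq\; \sqrt{n}\,(\hat\rho(\mset)+\epsilon) \;<\; 1,
\end{equation*}
where in the middle inequality I used $|x|_Q \leq 1 \Rightarrow |x|_\star \leq 1$. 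Thus $A^\tran Q A \prec Q$ for all $A \in \mset$, and $V(x) = x^\tran Q x$ is a common quadratic Lyapunov function.

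The only delicate step is the John-ellipsoid sandwiching: one needs the $\sqrt{n}$ factor (valid for centrally symmetric bodies) rather than the $n$ factor (which would appear for general convex bodies), and one needs to correctly convert set inclusions into the two-sided norm bound before taking the induced matrix norm. Everything else is routine: a single $\epsilon$ is absorbed by the strict inequality $\hat\rho(\mset) < 1/\sqrt{n}$, and the final contraction on the induced quadratic norm is exactly equivalent to the usual Lyapunov decrease $A^\tran Q A \prec Q$.
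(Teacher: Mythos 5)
Your proof is correct and follows essentially the same route the paper takes for its generalization of this result (Theorem~\ref{thm:cvslya1}): an extremal norm within $\epsilon$ of the JSR (Proposition~\ref{prop:JSRnorms}, replaced in the paper by the multinorm of Lemma~\ref{lemma:mnexists}), followed by John's ellipsoid theorem on the symmetric unit ball to pay only a $\sqrt{n}$ factor when passing to a quadratic norm. The sandwich direction, the induced-norm estimate, and the translation into $A^\tran Q A \prec Q$ all check out.
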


\begin{proposition}[e.g. \cite{JuTJSR}]
Given a set of matrices $\mset$ and an integer $T \geq 1$,  
$$ \hat \rho(\mset)^T = \hat \rho(\mset^T).$$ 
\label{prop:products}
\end{proposition}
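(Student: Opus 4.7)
The plan is to unwind the definition of the joint spectral radius on both sides of the identity and to exploit the elementary set-theoretic observation that products of $s$ matrices drawn from $\mset^T$ are in bijection with products of $sT$ matrices drawn from $\mset$. More precisely, writing
\[
M_t \;=\; \max\bigl\{\|A_{\sigma(t-1)} \cdots A_{\sigma(0)}\|: A_{\sigma(i)} \in \mset, \, 0 \leq i \leq t-1\bigr\},
\]
every $B_i \in \mset^T$ is a product $A_{(i+1)T-1} \cdots A_{iT}$ of $T$ matrices from $\mset$, so concatenating $s$ such blocks yields an arbitrary product of $sT$ matrices from $\mset$, and conversely any product of length $sT$ can be grouped into $s$ successive blocks of size $T$. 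Hence
\[
\max\bigl\{\|B_{s-1} \cdots B_0\| : B_i \in \mset^T\bigr\} \;=\; M_{sT}.
\]

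From there, the computation is essentially bookkeeping. Applying the definition (analogous to the one recalled in Section \ref{sec:CJSR} but for arbitrary switching) to the set $\mset^T$ gives
\[
\hat \rho(\mset^T) \;=\; \lim_{s \to \infty} M_{sT}^{1/s} \;=\; \lim_{s \to \infty} \bigl(M_{sT}^{1/(sT)}\bigr)^T \;=\; \bigl(\lim_{s \to \infty} M_{sT}^{1/(sT)}\bigr)^T \;=\; \hat \rho(\mset)^T,
\]
where the last equality uses that the full sequence $M_t^{1/t}$ converges to $\hat \rho(\mset)$, so the subsequence indexed by $t = sT$ converges to the same limit, and the continuous map $x \mapsto x^T$ commutes with the limit.

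The only non-routine point is the first equality above, which presupposes that the limit defining the JSR genuinely exists (and not merely as a limit superior). I would justify this by the standard Fekete-type argument: the sequence $M_t$ is submultiplicative, $M_{t+s} \leq M_t M_s$, since one can split any product of length $t+s$ into two subproducts and apply submultiplicativity of the induced norm; hence $\log M_t$ is subadditive, and Fekete's lemma yields the existence of $\lim_{t \to \infty} (\log M_t)/t$. This is the main (and essentially only) subtlety; everything else is a direct unpacking of definitions.
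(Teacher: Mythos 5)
Your proof is correct; the paper itself states Proposition~\ref{prop:products} without proof (citing \cite{JuTJSR}), and the argument it gives for the constrained analogue, Proposition~\ref{proposition:equal-Tlift}, is exactly your argument: identify length-$s$ products over $\mset^T$ with length-$sT$ products over $\mset$, then pass to the limit along the subsequence $t = sT$ using continuity of $x \mapsto x^T$. Your additional remark that the existence of the limit must be secured via submultiplicativity and Fekete's lemma is a legitimate and correctly handled refinement, though the paper's definition of the JSR already presupposes that the limit exists.
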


Putting together Theorem \ref{thm:ando} and Proposition \ref{prop:products}, one gets the following result:

\begin{theorem}
If an \textit{arbitrary switching system} on a set of matrices
 $\mset \subset \reels^{n \times n}$ is such that 
 $\hat \rho(\mset) < n^{-1/(2T)}$ 
for an integer $T \geq 1$, then the \textit{arbitrary switching system} on the set $\mset^T$ admits a \textit{common quadratic Lyapunov function}.

\label{thm:ptasjsr}
\end{theorem}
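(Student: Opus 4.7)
The plan is to combine the two results stated just above, essentially as a one-step corollary. The hypothesis $\hat\rho(\mset) < n^{-1/(2T)}$ is designed precisely so that raising to the $T$-th power brings us into the regime of Theorem \ref{thm:ando}, applied not to $\mset$ itself but to the set $\mset^T$.

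First I would observe that elements of $\mset^T$ are products of $T$ matrices from $\mset \subset \reels^{n \times n}$, hence $\mset^T \subset \reels^{n \times n}$ as well; the ambient dimension is unchanged, so Theorem \ref{thm:ando} is applicable to the arbitrary switching system on $\mset^T$ with the very same constant $1/\sqrt{n}$. Next, raising the assumed inequality to the $T$-th power gives
\begin{equation*}
\hat\rho(\mset)^T < \left(n^{-1/(2T)}\right)^T = n^{-1/2} = \frac{1}{\sqrt{n}}.
\end{equation*}
By Proposition \ref{prop:products}, $\hat\rho(\mset^T) = \hat\rho(\mset)^T$, so $\hat\rho(\mset^T) < 1/\sqrt{n}$. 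Theorem \ref{thm:ando} applied to the set $\mset^T$ then yields the existence of a common quadratic Lyapunov function for the arbitrary switching system on $\mset^T$, which is exactly the conclusion.

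There is no real obstacle here: the statement is a direct composition of Theorem \ref{thm:ando} (which provides the quadratic Lyapunov function under a $1/\sqrt{n}$ bound on the JSR) and Proposition \ref{prop:products} (which lets us convert a JSR bound on $\mset$ into one on $\mset^T$). The only subtle point worth stating explicitly is that passing from $\mset$ to $\mset^T$ preserves the matrix dimension $n$, so the constant in Ando and Shih's theorem is the same in both applications.
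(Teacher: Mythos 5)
Your proof is correct and is exactly the argument the paper intends: the theorem is stated as a direct consequence of "putting together" Theorem \ref{thm:ando} and Proposition \ref{prop:products}, which is precisely your composition of the two. The remark that $\mset^T$ stays in $\reels^{n\times n}$, so the constant $1/\sqrt{n}$ is unchanged, is a worthwhile explicit touch.
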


\subsection{Quadratic Multinorms}
\label{subsec:quad1}

John's Ellipsoid Theorem \cite{JoEPWI} is a well known result from convex geometry 
that has proven to be of great interest in optimization and 
for approximating the JSR of arbitrary switching systems  in particular \cite{BlNeOTAO}.

\begin{theorem}[John's Ellipsoid Theorem \cite{JoEPWI,BlNeOTAO}]
 Let $K \subset \reels^n$ be a compact convex set with non-empty interior.
  Then there is an ellipsoid $E$ with center $c  \in \reels^n$ such that 
  the inclusions $E \subseteq K \subseteq n(E-c)  + c$ hold. 
  If $K$ is symmetric about the origin $(K = -K)$ the constant $n$ can be changed into $\sqrt{n}$ (and $c = 0$).
 \label{thm:john}
 \end{theorem}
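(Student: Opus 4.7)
The plan is to realize the ellipsoid $E$ as the \emph{maximum-volume} ellipsoid inscribed in $K$, and to derive the inclusion $K \subseteq n(E-c)+c$ (resp.\ $\sqrt{n}E$ in the symmetric case) by a contradiction argument: any point of $K$ that sticks out too far from $E$ would, by convexity, allow us to fit a strictly larger ellipsoid inside $K$, contradicting maximality.

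First, I would establish existence of a maximizer. Parameterize ellipsoids as $E(c,P) = \{x : (x-c)^\top P (x-c) \leq 1\}$ with $P \succ 0$; the volume is $\omega_n (\det P)^{-1/2}$, where $\omega_n$ is the volume of the unit ball. The set of parameters $(c,P)$ such that $E(c,P) \subseteq K$ is non-empty (since $K$ has non-empty interior), closed, and bounded (centers lie in the compact set $K$, and $P$ is bounded below by $K$ being compact while volume is bounded above). A continuous function on a compact set attains its supremum, so a maximum-volume inscribed ellipsoid $E$ exists. In the symmetric case $K = -K$, one checks that if $E(c,P)$ is a maximizer then so is $E(-c,P)$; by strict concavity of $(\det P)^{1/n}$ as a function on positive-definite matrices (and the fact that the mean of two ellipsoids in $K$ lies in $K$), the maximizer is unique, forcing $c=0$.

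Next, after an affine change of coordinates I may assume $E$ is the unit ball $B$ centered at the origin (this preserves the symmetry $K=-K$ when applicable). Suppose for contradiction that there exists $p \in K$ with $|p| > \sqrt{n}$ in the symmetric case (resp.\ $|p| > n$ in the general case). Rotate so that $p = (h,0,\ldots,0)^\top$. By convexity and symmetry, $K$ contains $\mathrm{conv}(B \cup \{p,-p\})$ (resp.\ $\mathrm{conv}(B \cup \{p\})$). I would then exhibit an axis-aligned ellipsoid
\begin{equation*}
E_{\alpha,\beta} = \Bigl\{ x : \tfrac{(x_1 - s)^2}{\alpha^2} + \tfrac{x_2^2+\cdots+x_n^2}{\beta^2} \leq 1 \Bigr\}
\end{equation*}
(with $s=0$ in the symmetric case) contained in this convex hull, and show its volume $\omega_n \alpha \beta^{n-1}$ strictly exceeds $\omega_n$.

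The crux is the explicit optimization. The tangency condition between $\partial E_{\alpha,\beta}$ and the boundary of $\mathrm{conv}(B \cup \{p,-p\})$ produces an algebraic relation of the form $g(\alpha,\beta,h)=0$, which one solves for $\beta$ in terms of $\alpha$ (and $h$). Substituting into the volume $\alpha\beta^{n-1}$, one checks by a direct one-variable calculus argument that the maximum of this product exceeds $1$ precisely when $h > \sqrt{n}$, yielding the required contradiction. The general (non-symmetric) case is handled similarly, but with an off-center ellipsoid shifted toward $-p/|p|$, and the analogous calculation produces the weaker threshold $h > n$. The main obstacle is this final volume calculation: the bookkeeping for the tangency constraint is the only place where the dimension-dependent factors $n$ and $\sqrt{n}$ actually enter, and getting them correct is the technical heart of the theorem.
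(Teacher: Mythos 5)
The paper offers no proof of this statement: John's theorem is imported as a known result from the cited references, and only its centrally symmetric case is actually used (to build quadratic multinorms in Theorem \ref{thm:cvslya1}), so there is no internal argument to compare yours against. Your outline is the standard maximum-volume inscribed (L\"owner--John) ellipsoid proof, and its architecture is sound: existence of a maximizer by a compactness argument (with the usual care that one restricts to the sublevel set of admissible parameters on which the volume is at least that of some fixed inscribed ball, since the full admissible set is closed but not bounded); symmetrization of the center via Minkowski's determinant inequality when $K=-K$; affine reduction to the case where the maximizer is the unit ball $B$; and a contradiction obtained by inscribing a strictly larger ellipsoid in $\mathrm{conv}(B\cup\{p,-p\})$ (resp.\ $\mathrm{conv}(B\cup\{p\})$) whenever some $p\in K$ lies too far out.

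The one genuine reservation is that you stop precisely at the step that carries the entire quantitative content of the theorem --- the tangency relation and the one-variable volume maximization --- and only assert that the thresholds come out to $\sqrt{n}$ and $n$. This step does close, and in the symmetric case it is short enough to record: writing points of $\reels^n$ as $(x,y)$ with $x\in\reels$, $y\in\reels^{n-1}$, and $p=(h,0)$, the set $\mathrm{conv}(B\cup\{p,-p\})$ is the intersection of the half-spaces $ux+\langle v,y\rangle\le 1$ with $u^2+|v|^2\le 1$ and $u^2\le 1/h^2$, while the ellipsoid $x^2/a^2+|y|^2/b^2\le 1$ lies in such a half-space iff $a^2u^2+b^2|v|^2\le 1$. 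Taking $1\le a\le h$ and $b^2=(h^2-a^2)/(h^2-1)$ one checks $a^2u^2+b^2|v|^2\le b^2+(a^2-b^2)/h^2=1$, so this ellipsoid is inscribed; its volume ratio to $B$ squared is $f(a)=a^2\bigl((h^2-a^2)/(h^2-1)\bigr)^{n-1}$, with $f(1)=1$ and $(\log f)'(1)=2-2(n-1)/(h^2-1)>0$ exactly when $h^2>n$. Hence $h>\sqrt n$ contradicts maximality, which is the claimed bound. The non-symmetric case is analogous but with a one-parameter family of ellipsoids whose center slides along the axis toward $-p/|p|$, and the same kind of first-derivative test yields the threshold $h>n$ (measured after translating the maximizer's center to the origin); this bookkeeping is messier but standard. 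With that computation supplied, your proposal is a complete and correct proof of the cited theorem.
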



The approximation of a multinorm using a quadratic multinorm is not only computationally tractable (see Corollary \ref{cor:ptas1} below) but, 
 together with John's Ellipsoid theorem,
  it also allows to generalize the converse Lyapunov theorem of Ando and Shih (Theorem \ref{thm:ando}) to constrained switching systems.

\begin{theorem}
Any system $S(\graph, \mset)$,  with $$\hat \rho(\graph, \mset) < 1 / \sqrt{n}$$ admits a \textit{quadratic  Lyapunov multinorm}.
\label{thm:cvslya1}
\end{theorem}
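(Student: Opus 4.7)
The plan is to generalize the classical Ando--Shih argument node-by-node, using Lemma \ref{lemma:mnexists} to produce an initial (possibly non-quadratic) multinorm and then approximating each of its component norms by a quadratic one via John's ellipsoid theorem.

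First, fix $\epsilon > 0$ small enough that $\sqrt{n}\,(\hat\rho(\graph,\mset) + \epsilon) < 1$; this is possible because $\hat\rho(\graph,\mset) < 1/\sqrt{n}$ by hypothesis. By Lemma \ref{lemma:mnexists}, the system admits a $(\hat\rho(\graph,\mset) + \epsilon)$-multinorm $\{|\cdot|_1, \ldots, |\cdot|_{|V|}\}$. Denote by $K_i = \{x \in \reels^n : |x|_i \leq 1\}$ the unit ball of the $i$-th norm; each $K_i$ is a compact convex set, symmetric about the origin, with non-empty interior.

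Next, for each node $i \in V$, apply the symmetric version of John's Ellipsoid Theorem (Theorem \ref{thm:john}) to $K_i$ to obtain a centered ellipsoid $E_i = \{x : x^\tran Q_i x \leq 1\}$ with $Q_i \succ 0$ satisfying $E_i \subseteq K_i \subseteq \sqrt{n}\, E_i$. Translating these inclusions into norm inequalities, the quadratic norms $|x|_{Q_i} := \sqrt{x^\tran Q_i x}$ obey
\begin{equation*}
|x|_i \;\leq\; |x|_{Q_i} \;\leq\; \sqrt{n}\,|x|_i, \qquad \forall x \in \reels^n,\; \forall i \in V.
\end{equation*}

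Finally, verify the multinorm inequality for the candidate quadratic multinorm $\{|\cdot|_{Q_1}, \ldots, |\cdot|_{Q_{|V|}}\}$. For any edge $e = (v_i, v_j, \sigma) \in E$ and any $x \in \reels^n$, chaining the three estimates gives
\begin{equation*}
|A_\sigma x|_{Q_j} \;\leq\; \sqrt{n}\,|A_\sigma x|_j \;\leq\; \sqrt{n}\,(\hat\rho(\graph,\mset)+\epsilon)\,|x|_i \;\leq\; \sqrt{n}\,(\hat\rho(\graph,\mset)+\epsilon)\,|x|_{Q_i}.
\end{equation*}
By the choice of $\epsilon$ the multiplicative constant is strictly less than $1$, so $\{Q_i\}_{i \in V}$ is a quadratic Lyapunov multinorm.

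I do not expect a serious obstacle: the argument is essentially a per-node application of John's theorem to a multinorm obtained from Lemma \ref{lemma:mnexists}. The only point requiring care is that the distortion factor $\sqrt{n}$ in John's theorem is applied once per node independently, and it is crucial that the inequality picked up on the $j$-side ($|A_\sigma x|_{Q_j} \leq \sqrt{n}|A_\sigma x|_j$) combines with the equality direction on the $i$-side ($|x|_i \leq |x|_{Q_i}$) so that only a single factor of $\sqrt{n}$ enters the final bound --- which is exactly what matches the threshold $1/\sqrt{n}$ on the CJSR.
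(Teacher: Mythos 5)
Your proof is correct and follows essentially the same route as the paper's: obtain a $(\hat\rho+\epsilon)$-multinorm from Lemma \ref{lemma:mnexists}, approximate each component norm by a quadratic one via the symmetric John ellipsoid with distortion $\sqrt{n}$, and chain the inequalities so that only one factor of $\sqrt{n}$ enters. The only (immaterial) difference is the normalization of the ellipsoid --- you inscribe it in each unit ball where the paper circumscribes, so your sandwich inequalities are reversed, but the resulting bound $\gamma \leq \sqrt{n}(\hat\rho+\epsilon) < 1$ is identical.
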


\begin{proof}
We claim that, for any system $S(\graph, \mset)$, for any $\epsilon > 0$, there exists $\gamma \in \reels$, 
\begin{equation}
\gamma \leq \sqrt{n} (\hat \rho+\epsilon),
\label{eq:gammabound}
\end{equation}
such that the system admits a \textit{quadratic $\gamma$-multinorm}. 
 Let $\{|{\cdot}|_i, i = 1, \cdots, |V|\}$ be a $\left (\hat \rho + \epsilon \right )$-multinorm
  for the system, as given by Lemma \ref{lemma:mnexists}. 
By John's Ellipsoid Theorem \ref{thm:john}, for each norm in the multinorm, there exists a quadratic form $Q_i \succ 0$ such that
$$ \sqrt{x^\tran Q_i x} \leq |x|_i \leq \sqrt{n} \sqrt{x^\tran Q_ix}, \, \forall x \in \reels^n.$$

From Definition \ref{def:mn1} a $\left (\hat \rho + \epsilon \right )$-multinorm,
we know that for each  edge $(v_i, v_j,\sigma) \in E$, 
$$  |A_\sigma x|_j \leq (\hat \rho + \epsilon) |x|_i, $$
and therefore,

$$ \sqrt{(A_\sigma x)^\tran Q_j(A_\sigma x)}   \leq
 \sqrt{n} (\hat \rho + \epsilon) \sqrt{x^\tran Q_ix}.$$
 
 This shows that there exists always a quadratic $\gamma$-multinorm with $\gamma$ 
 satisfying $(\ref{eq:gammabound})$.

In conclusion, if $\hat \rho < 1/\sqrt{n}$, we can always find a $\gamma < 1$ satisfying (4),
and therefore, the system always admits a \textit{quadratic Lyapunov multinorm}.

\end{proof}

In the next corollary, we formulate a quasi-convex optimization program  
(see \cite{BoCO} for a detailed definition)
that can be solved in order to compute quadratic multinorms.

\begin{corollary}

Consider a system $S(\graph(V,E), \mset)$.\\ The value $\gamma^*$ such that 
\begin{equation}
\begin{aligned}
\gamma^*  = & \inf_{ Q  }  \gamma \\
 \mbox{s.t. } & \forall \, (v_i, v_j,  \sigma) \in E,  \\
 & \qquad {} -A_\sigma^\tran Q_j^{}A_\sigma^{} + \gamma^2 Q_i^{} \succ 0; \\
 & \forall \, i \in \{ 1, \cdots, |V|\}, \, Q_{i} \succ 0.
\end{aligned}
\label{eq:ptas_prog}
\end{equation}
satisfies the following inequalities : 
\begin{equation}
1  \leq \frac{\gamma^*}{ \hat \rho(\graph, \mset)} \leq {\sqrt{n}}.
\label{eq:ptas1}
\end{equation}
\label{cor:ptas1}
\end{corollary}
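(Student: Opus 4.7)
The plan is to establish the two inequalities in \eqref{eq:ptas1} separately, using the results already collected in the section on multinorms and the proof of Theorem \ref{thm:cvslya1}.

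For the lower bound $\gamma^{*} \geq \hat{\rho}(\graph,\mset)$, I would observe that any feasible point $(Q_{1},\dots,Q_{|V|},\gamma)$ of the program \eqref{eq:ptas_prog} induces a quadratic $\gamma$-multinorm: indeed, the constraint $\gamma^{2}Q_{i} - A_{\sigma}^{\tran} Q_{j} A_{\sigma} \succ 0$ is equivalent, for all $x\in\reels^{n}$, to $x^{\tran}A_{\sigma}^{\tran}Q_{j}A_{\sigma} x < \gamma^{2}\,x^{\tran}Q_{i}x$, which is exactly $|A_{\sigma}x|_{j} \leq \gamma|x|_{i}$ with $|x|_{k} := \sqrt{x^{\tran}Q_{k}x}$. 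By Lemma \ref{lemma:lower-bound1}, this forces $\hat{\rho}(\graph,\mset) \leq \gamma$ for every feasible $\gamma$, and taking the infimum yields $\hat{\rho}(\graph,\mset) \leq \gamma^{*}$.

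For the upper bound $\gamma^{*} \leq \sqrt{n}\,\hat{\rho}(\graph,\mset)$, I would reuse the construction used inside the proof of Theorem \ref{thm:cvslya1}. Fix $\epsilon > 0$; Lemma \ref{lemma:mnexists} provides a $(\hat{\rho}+\epsilon)$-multinorm $\{|\cdot|_{i}\}$, and John's Ellipsoid Theorem \ref{thm:john} gives forms $Q_{i} \succ 0$ with $\sqrt{x^{\tran}Q_{i}x} \leq |x|_{i} \leq \sqrt{n}\sqrt{x^{\tran}Q_{i}x}$. Combining these inequalities with the multinorm property shows that the $Q_{i}$ satisfy $A_{\sigma}^{\tran}Q_{j}A_{\sigma} \preceq n(\hat{\rho}+\epsilon)^{2} Q_{i}$ for every edge $(v_{i},v_{j},\sigma)$. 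This is only a non-strict inequality, so to produce a genuinely feasible point of \eqref{eq:ptas_prog} I would inflate $\gamma$ by an arbitrarily small factor $(1+\delta)$, obtaining strict feasibility at the value $\sqrt{n}(1+\delta)(\hat{\rho}+\epsilon)$. Letting $\delta,\epsilon \downarrow 0$ then gives $\gamma^{*} \leq \sqrt{n}\,\hat{\rho}(\graph,\mset)$.

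The main technical nuisance, and the only non-routine point, is reconciling the strict definiteness in \eqref{eq:ptas_prog} with the non-strict multinorm inequalities produced by Lemma \ref{lemma:mnexists} and John's theorem; once that is handled by the perturbation argument above, both bounds follow directly from results already in hand. A minor but worthwhile remark to include is that positive definiteness of the $Q_{i}$ and the edgewise constraints make the program quasi-convex in $\gamma$ (bisection on $\gamma$ reduces \eqref{eq:ptas_prog} to a family of LMI feasibility problems in the $Q_{i}$), justifying the "quasi-convex" terminology used in the statement and making the bounds computationally meaningful.
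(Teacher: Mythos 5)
Your proof is correct and follows essentially the same route as the paper: the lower bound comes from reading any feasible point as a quadratic $\gamma$-multinorm and invoking Lemma \ref{lemma:lower-bound1}, and the upper bound re-runs the John's-ellipsoid construction that the paper encapsulates by citing Theorem \ref{thm:cvslya1} together with homogeneity of the CJSR. Your explicit $(1+\delta)$-perturbation to pass from the non-strict semidefinite inequality to the strict feasibility required by \eqref{eq:ptas_prog} is a welcome extra detail that the paper's proof leaves implicit.
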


\begin{proof}
For a fixed $\gamma > 0$, the program described by (\ref{eq:ptas_prog}) has a solution (in terms quadratic forms) 
if and only if  $S(\graph,  \mset/\gamma)$ admits a quadratic Lyapunov multinorm. 

By Lemma \ref{lemma:lower-bound1} and by homogeneity of the CJSR, we have $\hat \rho( \graph, \mset) \leq \gamma$ for any solution of (\ref{eq:ptas_prog}).

Conversely, if the program (\ref{eq:ptas_prog}) does not admit a solution for $\gamma$, by Theorem \ref{thm:cvslya1} and homogeneity of the CJSR,
$\hat \rho (\graph, \mset) \geq \gamma / \sqrt{n}$.
 \end{proof}

In practice, the optimization program described by (\ref{eq:ptas_prog}) 
can be solved using a standard SDP solver together with a bisection algorithm.


\subsection{Hierarchy of Converse Lyapunov Theorems}
\label{subsec:quad2}

We will now generalize Proposition \ref{prop:products} and Theorem \ref{thm:ptasjsr} to constrained switching systems.
For a given integer $T \geq 1$, the arbitrary switching system on the set of products $\mset^T$ studied in Theorem \ref{thm:ptasjsr} actually describes 
the dynamics of
$y_s = x_{sT}$, $s = 1, 2, \ldots.$
In the constrained case, these dynamics can be obtained by performing a set of well defined operations on the automaton $\graph$ and  matrix set $\mset:$

 \begin{definition}
 Given a directed labeled graph $\graph(V,E)$, the graph $\graph_T(V,E_T)$ is the graph obtained by associating to each path $p \in \graph$, $|p| = T$, from node $v_i \in V$ to  node $v_j \in V$, 
 a unique edge $e = (v_i,v_j,\{\sigma(1), \ldots, \sigma(T)\}) \in E_T$, with $\sigma(i)$ being the label on the $i$'th edge in $p$.

Given a system $S(\graph,\mset)$,  
the system $S(\graph_T,\mset^T)$ is a system on the graph defined above such
 that to each edge $e = (v_i,v_j,\{\sigma(1), \ldots, \sigma(T)\})\in E_T$  is associated a matrix
 $$ A_e^{} =  A_{\sigma(T)}^{} \cdots A_{\sigma(1)}^{} \in \mset^T.$$
 \label{def:T-lift}
  \end{definition}  

  \begin{proposition}
For any integer $T \geq 1$, the CJSRs of the systems $S(\graph, \mset)$ and $S(\graph_T, \mset^T)$ are such that
  $$ \hat \rho(\graph, \mset)^T = \hat \rho(\graph_T,\mset^T). $$ 
\label{proposition:equal-Tlift}
\end{proposition}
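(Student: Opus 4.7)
The plan is to reduce the claim to the definition of the CJSR by exploiting the tight bijection between paths in $\graph_T$ and paths in $\graph$ of length divisible by $T$.

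First I would unpack Definition \ref{def:T-lift}. By construction, every edge of $\graph_T$ corresponds to exactly one path of length $T$ in $\graph$, and the matrix labeling that edge is precisely the ordered product of the matrices along that path. Concatenating edges in $\graph_T$ corresponds to concatenating the underlying length-$T$ paths in $\graph$. Therefore, for every integer $s \geq 1$, there is a bijection between paths $q \in \graph_T$ with $|q| = s$ and paths $p \in \graph$ with $|p| = sT$, and under this bijection the associated matrix products agree: $A_q = A_p$.

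Next I would translate this into a relation between the finite-horizon growth rates $\hat\rho_t$ of Definition \ref{def:GJSR}. Taking the maximum norm over the two corresponding sets of matrix products yields
\begin{equation*}
\hat\rho_s(\graph_T, \mset^T)^{s} \;=\; \max\{\|A_q\| : q \in \graph_T,\, |q|=s\} \;=\; \max\{\|A_p\| : p\in\graph,\, |p|=sT\} \;=\; \hat\rho_{sT}(\graph, \mset)^{sT},
\end{equation*}
so that $\hat\rho_s(\graph_T, \mset^T) = \hat\rho_{sT}(\graph, \mset)^{T}$ for every $s \geq 1$.

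Finally I would take $s \to \infty$. The left-hand side converges to $\hat\rho(\graph_T, \mset^T)$ by Definition \ref{def:GJSR}. The right-hand side is the $T$-th power of a subsequence of $\{\hat\rho_{t}(\graph,\mset)\}_{t\geq 1}$; since the full sequence converges to $\hat\rho(\graph,\mset)$, so does this subsequence, yielding $\hat\rho(\graph, \mset)^T$ in the limit. This gives the desired identity. The only delicate point is making sure the correspondence of Definition \ref{def:T-lift} really is a bijection preserving matrix products (so that no admissible length-$sT$ path of $\graph$ is missed and none is counted twice); once this combinatorial check is in place, the rest is a direct computation from the definition of the CJSR.
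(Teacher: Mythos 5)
Your proposal is correct and follows essentially the same route as the paper: both establish the identity $\hat\rho_s(\graph_T,\mset^T)=\hat\rho_{sT}(\graph,\mset)^T$ from Definition \ref{def:T-lift} and then pass to the limit along the subsequence $t=sT$, using continuity of exponentiation. Your write-up simply makes the path-product bijection explicit, which the paper leaves implicit.
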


\begin{proof}
Let $ \hat \rho_k(\graph_T, \mset^T)$
 be as in Definition \ref{def:GJSR} for the system $S(\graph_T,\mset^T)$.
 From this definition, we get
$$ \hat \rho_k(\graph_T, \mset^T) = \hat \rho_{kT}(\graph, \mset)^T.  $$

Since the subsequence $\hat \rho_{kT}(\graph, \mset)$
converges to $ \hat \rho(\graph, \mset)$ as $k \rightarrow \infty$,
 by continuity of the exponentiation,
 $\hat \rho_{kT}(\graph, \mset)^T$ converges to $ \hat \rho(\graph, \mset)^T$ as $k \rightarrow \infty$.

\end{proof}

We are now able to state the main theorem of this paper, 
a Converse Lyapunov Theorem generalizing Theorem \ref{thm:ptasjsr}:

\begin{theorem}[Converse Lyapunov Theorem]
$\,$\\
If for an integer $T \geq 1$ the system $S(\graph, \mset)$ is stable with 
$$\hat \rho (\graph, \mset) \leq n ^{-1/(2T)}$$
then the associated system $S(\graph_T,\mset^T)$ (see Definition \ref{def:T-lift}) admits a \textit{quadratic Lyapunov multinorm}.
\label{thm:badass}
\end{theorem}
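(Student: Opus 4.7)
The plan is to reduce the statement to the previously established quadratic converse Lyapunov theorem (Theorem \ref{thm:cvslya1}) via the $T$-lift construction. The observation that makes this work is that the lifted system $S(\graph_T, \mset^T)$ still lives in dimension $n$, so any result about quadratic multinorms that depends on the state-space dimension carries over verbatim.

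Concretely, I would proceed in two short steps. First, apply Proposition \ref{proposition:equal-Tlift} to rewrite
\[
\hat\rho(\graph_T,\mset^T) \;=\; \hat\rho(\graph,\mset)^T \;\leq\; \bigl(n^{-1/(2T)}\bigr)^T \;=\; \frac{1}{\sqrt{n}},
\]
using the hypothesis on $\hat\rho(\graph,\mset)$. Second, apply Theorem \ref{thm:cvslya1} directly to $S(\graph_T,\mset^T)$: since this system sits in $\reels^n$ and its CJSR is bounded by $1/\sqrt{n}$, it admits a quadratic Lyapunov multinorm, i.e.\ a collection of $|V|$ positive definite quadratic forms $Q_1,\ldots,Q_{|V|}$ satisfying the contraction inequality of Definition \ref{def:mn1} (with $\gamma < 1$) along every edge of $\graph_T$.

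The only real subtlety is the boundary case, which I expect to be the main (minor) obstacle. Theorem \ref{thm:cvslya1} is stated with the strict inequality $\hat\rho < 1/\sqrt{n}$, while the hypothesis here allows equality $\hat\rho(\graph,\mset) = n^{-1/(2T)}$. I would handle this by a standard perturbation argument: for any $\delta \in (0,1)$, the scaled system $S(\graph,\mset/(1+\delta))$ satisfies the strict inequality, so Theorem \ref{thm:cvslya1} applied to its $T$-lift produces quadratic forms $\{Q_i^{(\delta)}\}$ witnessing a Lyapunov multinorm with constant $\gamma_\delta < 1$; one then normalizes the $Q_i^{(\delta)}$ (e.g.\ so that $\tran Q_1^{(\delta)} = 1$) and extracts a convergent subsequence as $\delta \to 0$ by compactness, producing a quadratic Lyapunov multinorm for the unperturbed lifted system. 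Alternatively, if the statement is read with strict inequality in mind, the two-step argument above is already complete.
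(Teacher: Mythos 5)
Your two-step argument --- Proposition \ref{proposition:equal-Tlift} to get $\hat\rho(\graph_T,\mset^T)=\hat\rho(\graph,\mset)^T\leq 1/\sqrt{n}$, then Theorem \ref{thm:cvslya1} applied to the lifted system in $\reels^n$ --- is exactly the paper's proof. The boundary mismatch you flag ($\leq n^{-1/(2T)}$ in the hypothesis versus strict inequality in Theorem \ref{thm:cvslya1}) is real and the paper silently ignores it; just be aware that your proposed compactness patch is not automatic, since as $\delta\to 0$ the contraction constants $\gamma_\delta$ may tend to $1$ and the normalized forms may degenerate to positive semidefinite, so the limit need not be a \emph{Lyapunov} multinorm --- but this is beyond what the paper itself establishes, and with the strict reading your argument is complete and identical to theirs.
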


\begin{proof}
When $\hat \rho (\graph, \mset) \leq n ^{-1/(2T)}$, 
$\hat \rho(\graph_T, \mset^T) = \hat \rho (\graph, \mset)^T \leq 1/\sqrt{n}$ (Proposition \ref{proposition:equal-Tlift}).
 The proof is then direct from Theorem \ref{thm:cvslya1}.
\end{proof}

For any stable system $S(\graph, \mset)$, 
there exists $0 < \epsilon < 1$ such that $$ \hat{\rho} \leq (1-\epsilon).$$
By Theorem \ref{thm:badass}, given this $
\epsilon, $ and for any integer $T$ satisfying
 $$T \geq - \log(n)/\log( (1-\epsilon)^2), $$ 
 the system $S(\graph_T, \mset^T)$ admits a quadratic Lyapunov multinorm.

 Remark that, as mentioned in the introduction, a (quadratic) Lyapunov multinorm for $S(\graph_T, \mset^T)$ defines a (quadratic) multiple Lyapunov function for $S(\graph, \mset)$ that is guaranteed to decrease after every $T$ time steps.

From these results, we obtain a finite time approximation scheme for the CJSR 
using quadratic multinorms.

\begin{theorem}[Approximation scheme]
$\,$\\
For a given system $S(\graph, \mset)$ in dimension $n$, for a given \textit{relative precision} $\epsilon$, 
let $T$ be an integer such that 
$$T \geq {\log(n)}/{\log( (1+\epsilon)^2)}.$$

The solution $\gamma^*$ of the semidefinite program of Corollary 4.6 applied to the system $S(\graph_T, \mset^T)$ is such that

$$ \hat \rho(\graph, \mset) \leq (\gamma^*) ^{1/T} \leq
 (1+\epsilon )\hat \rho(\graph, \mset). $$ 
 \label{thm:approxTot}
\end{theorem}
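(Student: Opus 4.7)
The plan is to obtain the bound as a direct consequence of Corollary \ref{cor:ptas1} (the one-shot quadratic-multinorm bound with factor $\sqrt{n}$) applied to the lifted system, combined with the exponentiation property of the CJSR given by Proposition \ref{proposition:equal-Tlift}. No new construction seems to be needed: the lifting $S(\graph_T, \mset^T)$ has matrices in $\reels^{n \times n}$, so the dimension appearing in the John's-ellipsoid factor is unchanged, and the only nontrivial quantitative check is the logarithmic choice of $T$.

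First, I would apply Corollary \ref{cor:ptas1} to $S(\graph_T,\mset^T)$, which yields
$$
\hat\rho(\graph_T, \mset^T)\;\leq\;\gamma^*\;\leq\;\sqrt{n}\,\hat\rho(\graph_T, \mset^T).
$$
Next, I would invoke Proposition \ref{proposition:equal-Tlift} to substitute $\hat\rho(\graph_T, \mset^T) = \hat\rho(\graph, \mset)^T$, producing
$$
\hat\rho(\graph, \mset)^T\;\leq\;\gamma^*\;\leq\;\sqrt{n}\,\hat\rho(\graph, \mset)^T.
$$
Since all quantities involved are non-negative, taking $T$-th roots preserves the inequalities and gives
$$
\hat\rho(\graph, \mset)\;\leq\;(\gamma^*)^{1/T}\;\leq\;n^{1/(2T)}\,\hat\rho(\graph, \mset).
$$

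It remains only to verify that the hypothesis on $T$ forces the multiplicative slack $n^{1/(2T)}$ below $1+\epsilon$. The inequality $n^{1/(2T)}\leq 1+\epsilon$ is equivalent, after taking logarithms, to $T \geq \log(n)/(2\log(1+\epsilon)) = \log(n)/\log((1+\epsilon)^2)$, which is exactly the assumption. Substituting this bound into the right-hand inequality above yields $(\gamma^*)^{1/T} \leq (1+\epsilon)\hat\rho(\graph, \mset)$, which completes the proof.

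Honestly there is no significant obstacle here: the theorem is essentially the arithmetic consequence of a prior corollary and the $T$-lift identity, so the only work is the bookkeeping of inequalities and the reduction of the logarithmic condition. If any subtlety arises, it would be confirming that Corollary \ref{cor:ptas1} applies verbatim to the lifted system, which in turn reduces to noting that $S(\graph_T,\mset^T)$ is itself a constrained switching system in dimension $n$ in the sense of Definition \ref{def:T-lift}, so all hypotheses of Corollary \ref{cor:ptas1} are met.
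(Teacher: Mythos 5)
Your proposal is correct and follows essentially the same route as the paper's proof: apply Corollary \ref{cor:ptas1} to the lifted system $S(\graph_T,\mset^T)$, convert back via Proposition \ref{proposition:equal-Tlift}, take $T$-th roots, and check that the hypothesis on $T$ makes $n^{1/(2T)} \leq 1+\epsilon$. Your remark that the lift preserves the ambient dimension $n$ (so the John's-ellipsoid factor is unchanged) is the right subtlety to flag, and the paper relies on it implicitly.
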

 
\begin{proof}

Given an integer $T \geq 1$, solving the program of Corollary \ref{cor:ptas1} for $S(\graph_T, \mset^T)$ allows to retrieve
 an estimate $\gamma^*$ of $\hat \rho(\graph_T,\mset^T)$ satisfying $\gamma^* \leq \sqrt{n} \hat \rho(\graph_T,\mset^T).$
 
 Applying  Proposition \ref{proposition:equal-Tlift}, we have 
 
 $$(\gamma^* )^{1/T} \leq n^{1/(2T)} \hat \rho(\graph,\mset).$$
 
 The value of $T$ is then chosen such that $n^{1/(2T)} - 1 < \epsilon$.
 
\end{proof}


\subsection{Path-Dependent Lyapunov Functions  }
\label{subsec:pathdep}

In the works of Bliman and Ferrari-Trecate \cite{BlFeSAOD} and Lee and Dullerud \cite{LeDuUSOD}, 
the authors introduced multiple quadratic Lyapunov functions that associate a quadratic form 
per \textit{past trajectory over a finite amount of memory}. 
This amount of memory is an integer parameter defining how complex  the multiple quadratic Lyapunov functions are.
 By gradually increasing this complexity,
such tools provide a hierarchy of sufficient conditions for stability. 
However, they only provide a necessary condition of stability for \textit{asymptotically high complexity} (e.g. \cite{LeDuUSOD}, Theorem 9).

In this section, we provide a converse theorem for the existence of a path-dependent Lyapunov function.
More precisely, given an integer $T \geq 1$, we give a sufficient condition under which a system $S(\graph, \mset)$ 
admits a path-dependent Lyapunov function with memory $(T -  1)$.

 \begin{definition}
Given a system $S(\graph, \mset)$ and an integer $T \geq 0$, 
 a path-dependent Lyapunov function with memory $T$ is a set of quadratic forms such that :
 
 For each \textit{node} $v_i \in V$ and \textit{path} $p \in G$ of length $T$ ending at $v_i$, 
 there is a quadratic form $Q_{v_i; p }$.   
 
 For all paths $ \{ e(0), \ldots, e(T) \} \in \graph$ of length $T+1$, with $e(T) = (v_i, v_j, \sigma)$,
\begin{equation}
\begin{aligned}
&A_{\sigma}^\tran Q^{}_{v_j; \,\{ e(1), \ldots, e(T) \} } A_{\sigma}^{} - \\
& \qquad \qquad \qquad{}  Q^{}_{v_i; \,\{ e(0), \ldots, e(T-1) \} } \prec 0.\\ 
\end{aligned}
\label{eq:pathdep}
\end{equation}

$\,$
\end{definition}
  
Following this definition, a quadratic Lyapunov multinorm for $S(\graph, \mset)$ is a path-dependent Lyapunov function with memory $ 0$.

\begin{theorem}
  $\,$\\
For a  system $S(\graph,\mset)$, if $S(\graph_T, \mset^T)$ admits a quadratic Lyapunov multinorm, 
then $S(\graph, \mset)$ admits a path-dependent Lyapunov function with memory $(T-1)$.
\label{proposition:tliftandpdep}
\end{theorem}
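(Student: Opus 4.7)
The plan is to construct the path-dependent Lyapunov function explicitly by pulling back the $\graph_T$-multinorm along suffixes of the memory $q$, and then to verify the one-step decrease by a telescoping computation.

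Fix the quadratic Lyapunov multinorm $\{P_v\}_{v\in V}$ on $\graph_T$, so that $A_p^\tran P_{v_j} A_p \prec P_{v_i}$ for every path $p$ of length $T$ in $\graph$ from $v_i$ to $v_j$. For a path $q = (e_0, \ldots, e_{T-2})$ of length $T-1$ in $\graph$ with successive vertices $v_0, v_1, \ldots, v_{T-1} = v_i$, let $M_k$ denote the matrix product for the length-$k$ suffix of $q$ (so $M_0 = I$ and $M_{T-1} = A_q$), and (assuming invertibility for a moment) set
\[
Q_{v_i; q} \;:=\; \sum_{k=0}^{T-1} M_k^{-\tran}\, P_{v_{T-1-k}}\, M_k^{-1}.
\]
This is positive definite, and $Q_{v_i;q}(x)$ is naturally interpreted as the sum of the multinorm values $P_{v_{T-1-k}}$ evaluated at the ``virtual past states'' $M_k^{-1} x$ obtained by tracing back along $q$. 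To check the decrease for a length-$T$ path $p = q \cdot \sigma$ with $\sigma:v_i \to v_j$ and shifted memory $q'$, I would use the identity $(M_k')^{-1} A_\sigma = M_{k-1}^{-1}$ for $k \geq 1$ (where $M_k'$ is the analogous suffix product for $q'$) to telescope $A_\sigma^\tran Q_{v_j;q'} A_\sigma$; the bulk terms cancel exactly with those of $Q_{v_i;q}$, leaving the residual $Q_{v_i;q} - A_\sigma^\tran Q_{v_j;q'} A_\sigma = A_q^{-\tran} P_{v_0} A_q^{-1} - A_\sigma^\tran P_{v_j} A_\sigma$. Conjugating by $A_q$ reduces positive definiteness of this residual to $P_{v_0} \succ A_p^\tran P_{v_j} A_p$, exactly the hypothesis multinorm condition on $\graph_T$ applied to $p$.

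The main technical obstacle is non-invertibility of some matrix in $\mset$, which can cause $M_k^{-1}$ not to exist. To handle this I would first show, by a direct and inverse-free computation, that the companion form $R_{v_i;q} := \sum_{m=0}^{T-1} A_{q[0:m]}^\tran P_{v_m} A_{q[0:m]}$ (which equals $A_q^\tran Q_{v_i;q} A_q$ in the invertible case) satisfies the strict decrease $R_{v_j;q'}(A_{e_0} y) < R_{v_i;q}(y)$ in the ``past-state'' variable $y$. A genuine positive-definite $Q_{v_i;q}$ on current states can then be recovered via a Tikhonov-type regularization $Q_{v_i;q}(x) := \inf_y\!\left[\,R_{v_i;q}(y) + \kappa \|x - A_q y\|^2\,\right] + \eta\|x\|^2$, and the strict slack in the hypothesis multinorm inequality lets one choose $\kappa, \eta$ so that the required one-step decrease survives the regularization.
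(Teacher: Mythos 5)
Your first computation --- the telescoping identity for $Q_{v_i;q}=\sum_{k=0}^{T-1}M_k^{-\tran}P_{v_{T-1-k}}M_k^{-1}$, reducing the one-step decrease to $P_{v_0}\succ A_p^\tran P_{v_j}A_p$ --- is correct, and it is exactly the ``primal'' mirror image of what the paper does. The paper avoids inverses altogether by passing to $R=Q^{-1}$ via the Schur complement and defining the path-dependent forms as sums of \emph{push-forwards}, $R_{v_i;q}=\sum_{k}M_k^{}R_{v_{T-1-k}}^{}M_k^\tran$ (with $\gamma$-weights); this is well defined and positive definite for arbitrary, possibly singular, matrices (the $k=0$ term is $R_{v_{T-1}}\succ0$), and it telescopes in precisely the same way, the residual being $A_pR_{v_0}A_p^\tran-\gamma^{2T}R_{v_T}\preceq0$, which is the dual form of the multinorm hypothesis. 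So the clean fix for your invertibility problem is not a regularization but a dualization: replace $M_k^{-\tran}P_vM_k^{-1}$ by $M_k^{}P_v^{-1}M_k^\tran$ and run your own telescoping argument on the inverses.

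The second half of your proposal, as written, has a genuine gap. The inequality you need is
$Q_{v_j;q'}(A_\sigma x)\le Q_{v_i;q}(x)-\delta|x|^2$
for all $x$, and the only quantitative slack you have is the strict decrease
$R_{v_j;q'}(A_{e_0}y)\le R_{v_i;q}(y)-c|y|^2$, which is measured in the \emph{past} variable $y$. Substituting the shifted candidate $y'=A_{e_0}y^*$ (with $y^*$ the minimizer defining $Q_{v_i;q}(x)$) into the infimum for $Q_{v_j;q'}(A_\sigma x)$ produces the excess terms $\kappa\bigl(\|A_\sigma\|^2-1\bigr)|x-A_qy^*|^2$ and $\eta\bigl(\|A_\sigma\|^2-1\bigr)|x|^2$, which must be absorbed by $c|y^*|^2$. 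When some $\|A_\sigma\|>1$ and $x$ is orthogonal to the range of $A_q$, the minimizer is $y^*=0$, so the available slack vanishes while the excess is of order $|x|^2$; the bound fails and no uniform choice of $\kappa,\eta$ rescues this particular substitution. One can sometimes win by choosing a different $y'$ (e.g.\ exploiting that $Q_{v_i;q}(x)\ge\kappa\,d(x,\mathrm{range}\,A_q)^2$ blows up in that regime), but that requires a case analysis you have not supplied, and ``the strict slack lets one choose $\kappa,\eta$'' is asserting the conclusion rather than proving it. Given that the dual (Schur-complement) construction settles the singular case in two lines with the identical telescoping computation you already wrote, I would abandon the inf-convolution and finish the argument that way.
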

\begin{proof}
The full proof is  given  in Annex \ref{annex:path-dep}.
\end{proof}

As a direct consequence, converse Lyapunov theorems and approximation schemes such as Theorems \ref{thm:badass} and \ref{thm:approxTot} can be formulated
in terms of path-dependent Lyapunov functions.

\section{Numerical illustration}
\label{sec:examp}

Consider the graph $\graph$ represented in Figure \ref{fig:automat1}, and the set of 4 matrices $\mset$ containing

\begin{equation*}
\begin{aligned}
A_1 & = \begin{pmatrix} 
-0.67  &  0.26 \\
    0.82  & -0.28 \end{pmatrix},& 
A_2 & = \begin{pmatrix} 

    1.13  &  0.13 \\
   -0.82   & -0.28 \end{pmatrix}, \\
A_3 & = \begin{pmatrix} 
0.46  &  0.18 \\
    0.76  & 0.46 \end{pmatrix},& 
A_4 & = \begin{pmatrix}
0.28 &  0.76 \\
    -0.14  &  0.65 \end{pmatrix}.
\end{aligned}
\label{set:non-inv}
\end{equation*}

\begin{figure}[!ht]
\centering
\includegraphics[scale = 0.7]{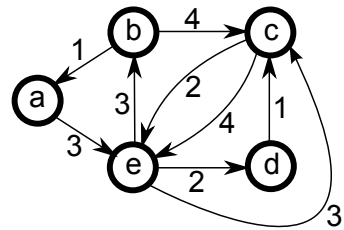}
\caption{The example graph $\graph$. The numbers near the directed edges stand for the labels.}
\label{fig:automat1}
\end{figure}
The matrix $A_2$ is unstable with a spectral radius of $1.15$. 
If the system allowed arbitrary switching, it would be unstable 
because of the presence of $A_2$.

When taking into account the constraints on the switching sequences imposed by $\graph$,
deciding the stability/instability of the system $S(\graph, \mset)$ becomes non-trivial.

The system is however stable. 
To show it, we apply the approximation schemes of Section \ref{sec:converse} 
and retrieve upper bounds on the CJSR of the system that are lower than 1.

In Figure \ref{fig:result_exemple},  for $T = 1, \ldots, 7$,
we compare the results obtained\footnote{Results generated obtained with Matlab on a common laptop (6GB Ram, Intel i7). We used YALMIP to solve the semidefinite programs (http://users.isy.liu.se/johanl/yalmip). Details and codes available on http://perso.uclouvain.be/matthew.philippe .}
 by applying Corollary \ref{cor:ptas1} to the system $S(\graph_T, \mset^T)$
 with those obtained using path-dependent Lyapunov functions with memory {$(T-1)$}.

 \begin{figure}[!ht]
\centering
\includegraphics[width = 0.36\textwidth]{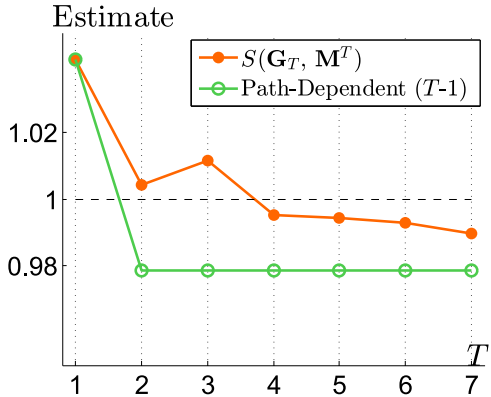}

\caption{ Estimation versus T, the method parameter.}
\label{fig:result_exemple}
\end{figure}

 As expected from Theorem \ref{proposition:tliftandpdep}, path-dependent Lyapunov functions provide better estimates for any fixed $T,$
 showing that the system is stable for $T = 2$ whereas $T = 4$ is needed using $S(\graph_T, \mset^T)$. 
 
 However,  
they also come with a greater computational cost. Path-dependent Lyapunov functions with memory $(T-1)$ have
 one different quadratic form \textit{per path of length $(T-1)$}, (here, for $T = 6$, this amounts to 371 quadratic forms),
   whereas when applying Corollary $\ref{cor:ptas1}$ to
$S(\graph_T, \mset^T)$, the number of quadratic form remains one per node in $\graph$ (here, only 5). 
   As illustrated in Figure \ref{fig:result_time}, for a same level of guaranteed accuracy, obtaining an estimate using $S(\graph_T, \mset^T)$
   is therefore faster. By level of guaranteed accuracy, we refer to the maximum relative error one can get using the above methods with parameter
 $T \geq 1$. From the proof of Theorem \ref{thm:approxTot}, this maximum error is given by $\epsilon(T) = n ^{1/(2T)} - 1$.

 \begin{figure}[!ht]
\centering
\includegraphics[width = 0.45\textwidth]{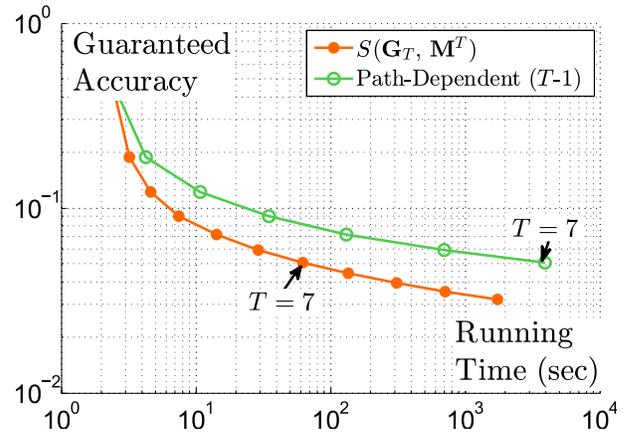}

\caption{ Experimental results. Guaranteed accuracy versus running time (in log-log scale). 
Each mark represents a distinct choice of $T$. The estimation using $S(\graph_{11}, \mset^{11})$  ran twice faster than using 
path-dependent Lyapunov functions with memory $(T-1) = 6$.}
\label{fig:result_time}
\end{figure}

\section{Discussion and Conclusion}

In this paper, we provide a stability analysis framework 
for a general class of discrete-time linear switching systems,
 with switching sequences generated by an automaton.
 
 This framework links the asymptotic growth rate, or Constrained Joint Spectral Radius
 of a system with the concept of multinorms.

With these tools in hand, we formulate finite converse Lyapunov theorems 
 for the existence of quadratic Lyapunov multinorms and path-dependent Lyapunov functions.
 We give sufficient conditions on the Constrained Joint Spectral Radius of a system
 under which these Lyapunov functions exists for a given value of their parameter $T$.
 
 These results naturally translate in arbitrary accurate
  approximation schemes for estimating the CJSR of a given system.
 
In further research, we will better compare the different existing 
multiple Lyapunov functions criteria.  We hope that the formalism developed here
 will allow us to conduct a systematic comparison.  Also, we plan to make use of this formalism for optimizing simulation techniques for cyber-physical systems: since multinorms provide a finite algebraic characterization of stable constrained switching system, they could help at the design level, when one builds a constrained switching system in order to simulate a particular cyber-physical system.

\bibliographystyle{IEEEtran}

\newpage
\section{Annex : Detailed proofs}

\subsection{Proofs of Lemmas \ref{lemma:lower-bound1}, \ref{lemma:mnexists}}
\label{annex:lemmas}

 \begin{proof}[Proof of Lemma \ref{lemma:lower-bound1}.]
Consider a path $p \in G$, $|p| = k$, from a node $v_i$ to a node $v_j$.
By the definition of a $\gamma$-multinorm, we have
 \begin{equation}
 |A_px|_j \leq \gamma^k |x|_i.
 \end{equation} 
 
For any norm $|{\cdot}|$, by equivalence of norms in $\reels^n$, there exists $0 < \alpha < \beta$ such that 
 $$ \alpha |x| \leq |x|_i \leq \beta |x|,  $$
 for any norm $|{\cdot}|_i$ in the multinorm and all $x \in \reels^n.$

Therefore, 
\begin{equation}
\begin{aligned}
 \|A_{p}\| & =  \max_{|x| = 1}\frac{|A_{p}x|}{|x|},\\
 & \leq \frac{\beta}{\alpha}\max_{|x| = 1}\frac{|A_{p}x|_j}{|x|_i}, \\
 & \leq \frac{\beta}{\alpha} \gamma^k.
 \end{aligned}
 \label{eq:boundedness}
\end{equation}

Thus, taking the limit $k \rightarrow \infty$, we obtain that the $k$th root of all products is lower than $\gamma$, and by Definition \ref{def:GJSR},  $\hat \rho \leq \gamma$ as well.
 
\end{proof}

\begin{proof}[Proof of Lemma \ref{lemma:mnexists}]
Consider the scaled set of matrices  
$$ \mset = \{A'_i = A_i /  ( \hat \rho + \epsilon ), \, i = 1, \cdots, N \}.$$  

 Given a path $p \in \graph(V,E)$, we use ${A}_p'$ to denote the product of matrices in $\mset'$ associated with the path. 
 Also, let $p \in \graph_i$ denote a path starting at the 
 particular node $v_i \in V$.

For $1 \leq i \leq N$, we define the following norms  
\begin{equation}
|x|_i := \sup_{t \geq 0} \{ |{A}_p' x| : \, p \in \graph_i, \, |p| = t \}, \, i = 1, \cdots, |V|;
\label{eq:nodconst}
\end{equation} 
with $|{\cdot}|$ corresponding to the classical euclidean norm.
We claim that these norms form a well defined $\left (\hat \rho + \epsilon \right)$-multinorm.

 These quantities have all the properties of a vector norm (homogeneity, definiteness, sub-additivity).
 
 They are indeed well-defined (by stability of $S(\graph, \mset)$), sub-additive and homogeneous (due to the use of the euclidean norm $|{\cdot}|$ in their definition), and positive definite (by considering paths of length $t = 0$, we have $|x|_i \geq |x|$).
 
We now show that they are
 $\left ( \hat \rho + \epsilon \right)$-multinorms.
 Rewriting the definition (\ref{eq:nodconst}) and considering an particular edge $e = (v_i, v_j, \sigma)$, we have
 
\begin{equation}
\begin{aligned}
  |x|_{i} & = \sup_{p \in \graph_i} \{ |  A_p'x|_2 \},\\
    & \geq \sup_{q \in \graph_j} \{ |A_{q}' A_\sigma' x|_2 : \, q \in \graph_j\}, \\
    & = | A_\sigma'x|_j.
\end{aligned}
\end{equation}

Since $ A_{\sigma}' = A_{\sigma} / ( \hat \rho + \epsilon)$, we conclude that
\begin{equation}
| A_\sigma x |_{j} \leq (\hat \rho+\epsilon) | x |_{i}, 
\end{equation} 
which proves our claim.
 
\end{proof}

%
%
%
%
%
%
\subsection{Proof of Proposition \ref{proposition:tliftandpdep} }
\label{annex:path-dep}

\begin{proof} 
We can assume $T \geq 2$, since for $T = 1$ the claim is trivial.

To ease the reading we will adapt the notations defining a path by referring to it by its succession of nodes and labels, 
$$p = \{v_0,\sigma(1)v_1, \cdots,\sigma(T)v_T\} \in G, $$ 
with $T$ the length of the path. 
Of course, for the notation to be consistent,
 we assume that $$(v_i,v_{i+1},\sigma(i+1)) \in E.$$

For a system $S(\graph_T, \mset^T)$ to admit a quadratic Lyapunov multinorm,
 there must be a constant $\gamma < 1$ such that the following set of LMIs holds true for all paths $p \in G$ of length $T$ (as defined above):
\begin{equation}
 A_p^\tran Q_{v_T}A_p - \gamma^{2T} Q_{v_0}, \preceq 0,
 \label{eq:tliftlmi}
\end{equation}
with $Q_i \succ 0, i = 1, \ldots, |V|$.
We claim that from the solution to the above set of LMIs, one can compute, for each path 
of length $T-1$, a quadratic form $Q_{v_{T}, \{v_1,\sigma(2)v_2, \cdots, \sigma(T) v_{T}\}}$ such that
\begin{align}
&A_{\sigma(T)}^\tran Q^{}_{v_T, \{v_1,\sigma(2)v_2,  \cdots, \sigma(T)v_T\}}A_{\sigma(T)} \nonumber \\
&\qquad   {} - \gamma^{2} Q^{}_{v_{T-1}, \{v_0,\sigma(1)v_1, \cdots, \sigma(T-1)v_{T-1}\}} \preceq 0,
\label{eq:pdep}
\end{align}
which implies the existence of a path-dependent Lyapunov function with memory $T-1$.

Letting $R = Q^{-1}$ denote the inverse of the quadratic form in the corresponding LMIs, using the Schurr complement formula, the LMIs (\ref{eq:tliftlmi}) are equivalent to
\begin{equation}
 A_pR_{v_0}A_p^\tran - \gamma^{2T} R_{v_T}, \preceq 0,
 \label{eq:tliftlmidu}
\end{equation}
and the LMIs (\ref{eq:pdep}) to 
\begin{align}
& A_{\sigma(T)} R^{}_{v_{T-1}, \{v_0,\sigma(1)v_1, \cdots, \sigma(T-1)v_{T-1}\}}A_{\sigma(T)}^\tran \nonumber \\
& \qquad {} - \gamma^{2} R_{v_T, \{v_1, \sigma(2)v_2,  \cdots, \sigma(T)v_T\}} \preceq 0.
\label{eq:pdepdu}
\end{align}

Given a valid solution $R_{v_i}$, $1 \leq i \leq |V|$, to the LMIs \ref{eq:tliftlmidu}, one can check that the following
\begin{align}
& R^{}_{v_{T-1}, \{ v_0,\sigma(1)v_1, \cdots, \sigma(T-1)v_{T-1}\} }  =  \nonumber \\
& \qquad {} R^{}_{v_{T-1}} +  A^{}_{\sigma(T-1)} R^{}_{v_{T-2}} A^\tran_{\sigma(T-1)} / \gamma^2 + \nonumber \\
& \qquad {} ( A^{}_{\sigma(T-1)} A^{}_{\sigma(T-2)} ) R^{}_{v_{T-3}} ( A^{}_{\sigma(T-1)}A^{}_{\sigma(T-2)} )^\tran /\gamma^4 + \cdots  \nonumber \\
& \qquad {} ( A^{}_{\sigma(T-1)} \cdots A^{}_{\sigma(1)} ) R^{}_{v_{0}}  ( A^{}_{\sigma(T-1)} \cdots A^{}_{\sigma(1)} )^\tran /\gamma^{2(T-1)},
\end{align}

satisfy the LMIs (\ref{eq:pdepdu}).
\end{proof}

\end{document}